%% file: main.tex
\documentclass[10pt]{article}

\usepackage{amsmath,amssymb}
\usepackage{amsthm,amsfonts}
\usepackage{graphicx}
\usepackage{comment}
\usepackage{tikz}
\usepackage{pgfplots}
\usepackage{authblk}
\usepackage{float}% Antes nofloat importante
\usepackage{subcaption}
\usepackage[left=2.8cm,right=2.8cm, top=3.1cm, bottom=2.8cm]{geometry}
\usepackage[%
  colorlinks=true,%
  linkcolor=blue,%
  citecolor=blue,%
  urlcolor=blue%
]{hyperref}
\usepackage{hypcap}%importante

\usetikzlibrary{arrows.meta,calc,decorations.pathreplacing}
\pgfplotsset{compat=1.18}
\DeclareCaptionLabelFormat{subfiglabel}{#2)}
\newcommand{\mphrase}[1]{\vcenter{\hbox{\shortstack[c]{#1}}}}
\newcommand{\ControlC}{\textsf{\textup{\textbf{c}}}}
\newcommand{\ControlS}{\textsf{\textup{\textbf{s}}}}

\newtheorem{theo}{Theorem}[section]

\newtheorem{coro}[theo]{Corollary}
\newtheorem{prop}[theo]{Proposition}

\newtheorem{remark}[theo]{Remark}
\newcommand{\R}{\hbox{\rm I \kern -5pt R}}     % simbolo de Reales
\newcommand{\p} {\hbox{\rm I \kern -5pt P}}
\def\n        {\nabla}

\numberwithin{equation}{section}

\title{Modeling of an ODE-constrained optimization problem describing tumor dynamics, and numerical approximation via sequential
physics-informed neural networks }

\author[a]{Juan J. Forero-Herna\'ndez\thanks{E-mail: \href{mailto:juan2258036@correo.uis.edu.co}{ juan2258036@correo.uis.edu.co}}}
\author[a]{\'Elder J. Villamizar-Roa\thanks{E-mail (Corresponding author) \href{mailto:jvillami@uis.edu.co}{jvillami@uis.edu.co}}}

\affil[a]{Universidad Industrial de Santander, Escuela de Matem\'{a}ticas,  Carrera 29 Calle 9, A.A. 678, CP 680002, Bucaramanga, Colombia.}

\date{}

\IfFileExists{algorithm.sty}{\usepackage{algorithm}}{%
  \newcounter{algorithm}
  \renewcommand{\thealgorithm}{\arabic{algorithm}}
}
\IfFileExists{algpseudocode.sty}{\usepackage{algpseudocode}}{%
  \newenvironment{algorithmic}[1][]{%
    \begin{list}{}{\setlength{\leftmargin}{1.2em}}%
    \item[]%
  }{%
    \end{list}%
  }%
  \newcommand{\State}{\item[]}%
}

\newlength{\algindentone}
\newlength{\algindenttwo}
\newcommand{\algline}[1]{%
    \State \parbox[t]{\dimexpr\linewidth\relax}{#1}%
}

\newcommand{\alglinei}[1]{%
    \State \hspace*{\algindentone}%
    \parbox[t]{\dimexpr\linewidth-\algindentone\relax}{#1}%
}

\newcommand{\alglineii}[1]{%
    \State \hspace*{\algindenttwo}%
    \parbox[t]{\dimexpr\linewidth-\algindenttwo\relax}{#1}%
}

\makeatletter
\providecommand{\fname@algorithm}{Algorithm}
\newenvironment{breakablealgorithm}
  {
   \par\noindent
   \refstepcounter{algorithm}
   \hrule height.8pt depth0pt \kern2pt
   \renewcommand{\caption}[2][\relax]{
     {\raggedright\textbf{\fname@algorithm \thealgorithm}\ ##2\par}
     \ifx\relax##1\relax
       \addcontentsline{loa}{algorithm}{\protect\numberline{\thealgorithm}##2}
     \else
       \addcontentsline{loa}{algorithm}{\protect\numberline{\thealgorithm}##1}
     \fi
     \kern2pt\hrule\kern2pt
   }
  }
  {
   \kern2pt\hrule
   \par
  }
\makeatother

\begin{document}
\maketitle
 
%%%%%%%%%%%%%%%%%%%%%%%%%%%%%%%%%%%%%%%%%%%%%%%%%%%%%%%%%%%%%
%%%%%%%%%%%%%%%%%%%%%%%%%%%%%%%%%%%%%%%%%%%%%%%%%%%%%%%%%%%%%
% RESUMEN
%%%%%%%%%%%%%%%%%%%%%%%%%%%%%%%%%%%%%%%%%%%%%%%%%%%%%%%%%%%%%
%%%%%%%%%%%%%%%%%%%%%%%%%%%%%%%%%%%%%%%%%%%%%%%%%%%%%%%%%%%%%

\begin{abstract}
In this paper, we study an optimal control problem related to an ODE model of glioblastoma growth influenced by the oxygen. The  model considers a couple of controls describing the chemotherapy and antiangiogenic therapies. The cost functional aims to reduce the tumor growth, bring the oxygen concentration close to a desired value, and penalize the use of therapies. We solve the optimal control problem, proving the existence of a global optimal control and deriving first-order necessary optimality conditions through the Pontryagin Minimum Principle. For the numerical approximation, we use Physics-Informed Neural Networks (PINNs) to solve the state and adjoint systems, together with a gradient descent method with Armijo line search for the controls. To address the strategy of PINNs we consider the methodology proposed in \cite{Roy}, making a decomposition of the time domain into several subintervals, using different neural networks in each subinterval and enforcing continuity conditions between successive time subintervals. This strategy, called sequential PINN formulations in time, including soft and hard-constrained versions, is compared with the corresponding approximation results of classical solvers and traditional PINNs counterparts.

\vspace{0.3cm}

\noindent{\bf Keywords.} {Tumor Growth, Glioblastoma, Optimal Control, Pontryagin Minimum Principle, Physics-Informed Neural Networks, Sequential PINNs, Armijo Line Search.}\vspace{0.3cm}

%\noindent{\bf AMS subject classifications.} 35K55; 35Q35; 35Q92; 92C17
\noindent{\bf AMS subject classifications.} {34C60; 34H05; 49J15; 49K15; 93C10; 68T07.}
\end{abstract}

%%%%%%%%%%%%%%%%%%%%%%%%s%%%%%%%%%%%%%%%%%%%%%%%%%%%%%%%%%%%%%
%%%%%%%%%%%%%%%%%%%%%%%%%%%%%%%%%%%%%%%%%%%%%%%%%%%%%%%%%%%%%
% CAPÍTULO 1: INTRODUCCIÓN
%%%%%%%%%%%%%%%%%%%%%%%%%%%%%%%%%%%%%%%%%%%%%%%%%%%%%%%%%%%%%
%%%%%%%%%%%%%%%%%%%%%%%%%%%%%%%%%%%%%%%%%%%%%%%%%%%%%%%%%%%%%
\section{Introduction}
The mathematical modeling of cancer represents an important research topic due to the complexity of the disease development, resulting from multiple biological, genetic, and environmental factors. From a medical perspective, the disease arises from genetic mutations that alter normal cell function and induce uncontrolled proliferation, giving rise to what are known as cancerous tumors. Chromosomal abnormalities accumulate, leading to profound genetic instability. In later stages, cancer cells metastasize, allowing them to spread to distant tissues and organs
\cite{Bunz}. A broad class of cancerous tumors corresponds to glioblastomas, which are brain and spinal cord tumors characterized by being highly invasive and having irregular morphologies that cannot be identified with sufficient precision using medical imaging techniques, making it difficult to achieve a sufficiently precise resection \cite{leo1}.\\

In general, to develop effective therapeutic strategies, it is necessary to understand in depth the dynamics of tumor cell density and its interaction with the surrounding microenvironment. The analysis of tumor dynamics under different biological and physical scenarios allows the evaluation of cellular responses to therapeutic interventions \cite{Anderson2005, Natesan}. From the mathematical point of view, the tumor dynamics is often expressed through systems of ordinary differential equations and partial differential equations. In particular, in the context of dynamics of the glioblastoma, some important elements stand out. First, the selection of variables governing cellular dynamics and the manner in which reactions are represented, the incorporation or omission of spatial diffusion processes, and the accurate formulation of therapeutic mechanisms aimed at constraining tumor growth. In particular, the complex dynamics of tumor cell infiltration into brain tissue poses significant challenges
from both clinical and scientific perspectives, for which mathematical modeling has  become an indispensable tool to elucidating the mechanisms of tumor propagation and for exploring potential therapeutic strategies 
\cite{forero-hernandez}. In relation with the variables governing cellular dynamics, some studies have remarked the role of the oxygen in the cellular proliferation, as well as in the aggressiveness and invasiveness of glioblastoma (see, e.g. \cite{Gomez2017, leo1, Swanson}). Therefore, inspired by \cite{forero-hernandez}, in this paper, we propose and analyze an optimal control problem
to deal with the treatment of the tumor invasion, by using chemotherapic and antiangiogenic controls
where the state equations describe the evolution of glial cells in the presence of oxygen. Precisely, we consider the following ODE state-control system

\begin{equation}
	\begin{cases}
		u' =  \overbrace{\rho(\sigma)u(\alpha-u)}^{\mbox{\tiny{Logistic Growth}}} - \overbrace{\kappa \ControlC u \sigma}^{\mbox{\tiny{Chemotherapy}}},\\
		\sigma'= \overbrace{P_{er}S_{v}(\beta-\sigma)}^{\mbox{\tiny{Reaction-Production}}}-\overbrace{{\frac{A_{ox}u\sigma}{k_{ox}+\sigma}}}^{\mbox{\tiny{Consumption}}}+\overbrace{(1-\ControlS) S_c u}^{\mbox{\tiny{\shortstack{Antiangiogenic\\Therapy}}}
}, \\
        u(0)= u_0, \sigma(0)= \sigma_0,
	\end{cases}
\label{eq:SistemaControlado}
\end{equation}
where $u:[0,T] \to \mathbb{R}$ and $\sigma :[0,T] \to \mathbb{R}$ are the unknowns representing the tumor cell density and the oxygen concentration, respectively. Here $u_0\geq0$ and $\sigma_0\geq 0$ denote the initial tumor cells density and initial oxygen concentration, respectively. The first equation of \eqref{eq:SistemaControlado} describes the evolution of the tumor cells. The term $\rho(\sigma)u(\alpha - u)$ represents a logistic growth modulated by the oxygen concentration. The constant $\alpha > 0$ represents the carrying capacity, while $\rho(\sigma)$ is the oxygen-dependent proliferation rate. The term $-\kappa \ControlC u \sigma$ models the action of the cytotoxic therapy under the tumor cells density. More exactly, $\ControlC$ is a time-dependent function with $0\leq \ControlC\leq 1$ representing the administration of a cytotoxic drug
(chemotherapy) which reduces the tumor cell population. The parameter $\kappa > 0$ measures the efficiency of the control $\ControlC$, and the factor $-u\sigma$ reflects that the decrease of tumor density depends on both the available tumor population and the oxygen level. The second equation in \eqref{eq:SistemaControlado} describes the evolution of oxygen. The term $P_{er}S_v(\beta -\sigma)$ represents the oxygen supply from the vasculature toward the reference level $\beta > 0$. The constant $P_{er} > 0$ denotes the vascular permeability and $S_v > 0$ the vascular density. The term $-\frac{A_{ox}u\sigma}{k_{ox}+\sigma}$ describes oxygen consumption by tumor cells according to Michaelis-Menten kinetics, where $A_{ox}$ and $k_{ox}$ are positive constants. Finally, the source term $(1-\ControlS)S_cu$ models the contribution of angiogenic factors released by the tumor. The constant $S_c > 0$ describes the strength of the time-dependent control $0\leq \ControlS\leq 1$ (antiangiogenic therapy).\\

The proliferation rate is described by a function
\(\rho:\mathbb{R}^+\to\mathbb{R}\) such that \(\rho\in C^1([0,\infty))\) and
\(\rho'\) is bounded. We also assume that \(\rho\) remains strictly positive, that is, 
$0<\rho_{\min}\leq \rho(s),$ $s\geq 0,$ for some constant $\rho_{\min}.$ A type of proliferation function used in the literature is given by (see, e.g. \cite{Gomez2017, leo1}):
\[
\rho(\sigma)=\frac{\hat{\rho}}{\alpha}
\left(
\frac{\sigma}{\beta}
+b\left(1-\frac{\sigma}{\beta}\right)
\right).
\]
Here, \(\hat{\rho}\) denotes the maximal proliferation rate in normoxic
conditions, while the parameter \(b\in(0,1]\) modulates the degree to which low oxygen levels reduce proliferation.\\

From a biomedical standpoint, the primary objective is to suppress or inhibit tumor progression through multimodal therapeutic strategies, including chemotherapy and angiogenesis inhibition. Nevertheless, these interventions are frequently associated with substantial adverse effects, such as fatigue, gastrointestinal disturbances, hematopoietic suppression, among others, which can result in immunosuppression \cite{Joshi2009}. Consequently, it is appropriate to formulate an optimal control framework designed to minimize tumor proliferation while concurrently modulating oxygen concentration toward a prescribed physiological target. An additional objective within this framework is the minimization of pharmacological dosages to reduce the severity of treatment-related toxicities. Therefore, the aim of this paper 
propose and analyze an optimal control problem associated with the nonlinear system (\ref{eq:SistemaControlado}), modeling the glioblastoma growth influenced by the presence of oxygen where the
controls are two different (chemotherapy and antiangiogenic) therapies.\\

We analyze the
existence and boundedness of solution of the state system and subsequently, we propose and
analyze a related optimal control problem. We consider a cost functional that reduces
the tumor growth, brings the oxygen concentration close to a desired value, and accounts for the therapeutic cost. We prove the existence of a global
optimal solution and derive necessary first-order optimality conditions via the Pontryagin Minimum Principle. Finally, we
propose a methodology for approximating the optimal control based on the physics-informed neural networks (PINNs) as a novel approach to solve the ODE-constrained control problem. Explicitly, we use the PINN methodology to approximate the state and the adjoint systems, combined with a minimization procedure of the cost functional making use of the gradient-descent algorithm with Armijo step-size. Within the architecture of PINNs, we consider the sequential methodology proposed by \cite{Roy}, through which the time domain is decomposed into several subdomains, using different neural networks in each subdomain. In order to ensure the accuracy in the prediction of the time-dependent problem, it is necessary to impose time compatibility conditions between successive time subintervals through soft- and hard-constrained formulations, using a loss term and an ansatz solution, respectively. This approach has been used to approximate some time-dependent problems including advection, Allen–Cahn, and Korteweg–de Vries equations. In this paper, we adapt the sequential PINNs strategy to provide a numerical approximation of the ODE-constrained optimization problem \eqref{eq:ProblemaDeControl}, in which, not only the approximation of the state equations is required, but also of the corresponding adjoint system. This is a point to emphasize, given that the accuracy of the adjoint system depends on a good approximation of the state system in the whole interval, in particular, on the final time. The numerical experiments
conducted with the proposed method show that
the architecture of the sequential neural network, provide accurate predictions that take advantage of the flexibility of deep learning techniques, compared with the corresponding approximation results of classical solvers and traditional PINNs counterparts. 
\\ 

The content of this paper is organized as follows. In Section 2, we establish and analyze the optimal control problem. We prove the existence and boundedness of solutions for the state system \eqref{eq:SistemaControlado}, and the existence of an optimal control, as well as we derive first-order necessary optimality conditions through the Pontryagin Minimum Principle. In Section 3, we present the PINN framework used in this work, including fully connected neural networks, standard PINNs, sequential PINNs in time, and the soft and hard-constrained formulations. We also give an error estimate for the sequential PINN approximation. In Section 4, we present the numerical algorithm for the state-adjoint PINN approximations, and  we describe the computational setup. Additionally, we conduct numerical experiments by solving the optimal control problem \eqref{eq:ProblemaDeControl} in several scenarios, analyzing the error and the training duration differences between Hard-Constrained sequential PINNs and Soft-Constrained Sequential PINNs, and the corresponding approximation via a classical solver. Finally, in Section 5, we summarize the conclusions of this work.

\section{Optimal Control Problem}
In this section, we formulate and solve an optimal control problem associated with the nonlinear ODE system (\ref{eq:SistemaControlado}). The objective is to control the tumor growth under the action of therapies. We consider an optimization problem where the cost functional aims to reduce the tumor growth, bring the oxygen concentration close to a desired value, and at the same time use the lowest possible doses of therapeutic load to reduce adverse effects. 
\subsection{Existence and boundedness of solution}
Before establishing the mathematical formulation of the optimal control problem, we get the following result of existence and boundedness for the solution of the state-system \eqref{eq:SistemaControlado}.
\begin{theo}\label{teo:ExistenciaSolClasica}
    Let $[\ControlC, \ControlS] \in L^\infty(0,T)^2$ with $0\leq \ControlC(t), \ControlS(t) \leq 1$ a.e. $t \in [0,T]$. Assume that $0 \leq u_0 \leq \alpha$ and $0 \leq \sigma_0 \leq C_\sigma$, where $C_\sigma := \beta + \frac{S_c\alpha}{P_{er}S_v}$. Then the state-system \eqref{eq:SistemaControlado} admits a unique absolutely continuous solution $[u,\sigma]$ on $[0,T]$. Moreover, the solution $[u,\sigma]$ satisfies the bounds
    \[
    0\leq u(t) \leq \alpha \qquad and \qquad 0 \leq \sigma(t) \leq C_\sigma, \qquad \text{a.e. } t\in[0,T].
    \]
\end{theo}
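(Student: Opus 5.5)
We will follow the standard Carathéodory route. First we truncate the nonlinearities, then obtain a local absolutely continuous solution, then prove the invariance of the box $[0,\alpha]\times[0,C_\sigma]$. That invariance makes the truncation inactive and extends the solution to all of $[0,T]$.

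\emph{Truncation and local solution.} Let $\pi_1(u)=\min\{\max\{u,0\},\alpha\}$ and $\pi_2(\sigma)=\min\{\max\{\sigma,0\},C_\sigma\}$. Consider the modified system
\[
u'=\rho(\pi_2\sigma)\,u(\alpha-\pi_1 u)-\kappa\ControlC\, u\,\pi_2\sigma,
\]
\[
\sigma'=P_{er}S_v(\beta-\sigma)-\frac{A_{ox}\pi_1 u\,\sigma}{k_{ox}+\pi_2\sigma}+(1-\ControlS)S_c\,\pi_1 u.
\]
The factors $u$ and $\sigma$ are kept untruncated where they multiply the rest of a term; this is deliberate, to make the sign arguments below work.

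The right-hand side $F(t,u,\sigma)$ is measurable in $t$. It is also globally Lipschitz in $(u,\sigma)$, uniformly in $t$, for the following reasons:
\begin{itemize}
\item $\rho\in C^1$ with $\rho'$ bounded, so $\rho\circ\pi_2$ is bounded and Lipschitz;
\item $\pi_1,\pi_2$ are Lipschitz and bounded;
\item $k_{ox}+\pi_2\sigma\ge k_{ox}>0$;
\item $\ControlC,\ControlS\in[0,1]$.
\end{itemize}
All terms are therefore products of bounded Lipschitz factors with at most one linear factor. Carathéodory's theorem, together with the Lipschitz bound via Gronwall, gives a unique global absolutely continuous solution on $[0,T]$.

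\emph{Invariance.} This is the main step. We show that the solution of the truncated system satisfies $0\le u\le\alpha$ and $0\le\sigma\le C_\sigma$. The four bounds are handled as follows.
\begin{itemize}
\item \emph{$u\ge0$.} The $u$-equation has the form $u'=a(t)u$ with $a\in L^\infty$, so $u(t)=u_0\exp\big(\int_0^t a\big)\ge0$.
\item \emph{$u\le\alpha$.} Test with $(u-\alpha)^+$. Where $u>\alpha$ we have $\pi_1 u=\alpha$, so the growth term vanishes, and the chemotherapy term is $\le0$ because $u>0$ and $\pi_2\sigma\ge0$. Hence $\frac{d}{dt}\frac12((u-\alpha)^+)^2\le0$, and since $(u_0-\alpha)^+=0$ we get $u\le\alpha$.
\item \emph{$\sigma\ge0$.} Test with $\sigma^-$. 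Where $\sigma<0$, write $\sigma'=P_{er}S_v\beta+(1-\ControlS)S_c\pi_1u-\big(P_{er}S_v+\tfrac{A_{ox}\pi_1u}{k_{ox}}\big)\sigma$. The first two terms are $\ge0$ and the bracket is bounded, which yields $\frac{d}{dt}(\sigma^-)^2\le C(\sigma^-)^2$. Gronwall then gives $\sigma^-\equiv0$.
\item \emph{$\sigma\le C_\sigma$.} Test with $(\sigma-C_\sigma)^+$. Using $\pi_1u\le\alpha$, dropping the nonpositive consumption term (valid since $\sigma>0$ there), and using $1-\ControlS\le1$, we get $\sigma'\le P_{er}S_v(\beta-\sigma)+S_c\alpha=P_{er}S_v(C_\sigma-\sigma)<0$ whenever $\sigma>C_\sigma$. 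Hence $(\sigma-C_\sigma)^+$ is nonincreasing and stays $0$.
\end{itemize}
These differential inequalities are legitimate for absolutely continuous functions by the chain rule for $(\cdot)^\pm$ in $W^{1,1}$.

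\emph{Conclusion.} On the box, $\pi_1u=u$ and $\pi_2\sigma=\sigma$, so the truncated solution solves \eqref{eq:SistemaControlado} on $[0,T]$ and satisfies the stated bounds.

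For uniqueness, take any absolutely continuous solution of \eqref{eq:SistemaControlado}. The same invariance arguments apply directly to the original system. For $u\ge0$ this again uses the linear form $u'=a(t)u$, valid as long as the solution exists, and the solution exists because it is locally bounded. Since the nonlinearities are locally Lipschitz, Gronwall shows such a solution coincides with the truncated one.

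The main obstacle is the bookkeeping in the invariance step: the truncation has to be chosen so that the sign arguments are valid in the regions where the solution might leave the box. Keeping the multiplicative $u$ and $\sigma$ untruncated, as above, is what resolves it.
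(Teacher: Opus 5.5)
\textbf{The global Lipschitz claim is false.} One step of your argument rests on it: the truncated field is not globally Lipschitz. A bounded Lipschitz factor multiplied by an untruncated linear factor is only locally Lipschitz. For $0<\sigma<C_\sigma$ you have $\partial_\sigma(\kappa\ControlC\,u\,\pi_2\sigma)=\kappa\ControlC\,u$, which is unbounded in $u$. For $0<u<\alpha$ you have $\partial_u\bigl(A_{ox}\pi_1u\,\sigma/(k_{ox}+\pi_2\sigma)\bigr)=A_{ox}\sigma/(k_{ox}+\pi_2\sigma)$, which is unbounded in $\sigma$. So the factors you deliberately left untruncated are exactly what destroys global Lipschitz continuity.

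\textbf{The conclusion of that step still holds.} Your field is locally Lipschitz in $(u,\sigma)$ uniformly in $t$, and it has linear growth, $|F(t,u,\sigma)|\le C(1+|u|+|\sigma|)$. Carath\'eodory then gives a unique maximal solution, and Gronwall applied to $|u|+|\sigma|$ excludes blow-up before $T$.

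Alternatively, truncate those factors as well. The field then becomes bounded and globally Lipschitz, and all four sign arguments still work. On $\{u<0\}$ the $u$-equation becomes $u'=0$. On $\{u>\alpha\}$ it becomes $u'=-\kappa\ControlC\,\alpha\,\pi_2\sigma\le 0$. The two $\sigma$-bounds are unchanged. So your closing remark, that keeping $u,\sigma$ untruncated is what resolves the obstacle, is not accurate.

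\textbf{Uniqueness paragraph.} The invariance arguments do not transfer verbatim to the untruncated system. There, $u\le\alpha$ requires $\sigma\ge 0$ to be known first. You do not need invariance for uniqueness anyway. Two absolutely continuous solutions on $[0,T]$ take values in a common compact subset of the domain, where the original field is Lipschitz uniformly in $t$, and Gronwall gives coincidence.

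\textbf{Comparison with the paper.} With these repairs your argument is a valid alternative to the paper's. The paper works with the original system directly. It takes the maximal Carath\'eodory solution on $[0,T_{\max})$ and obtains the four bounds by scalar comparison, treating each component as a sub- or supersolution with the other component frozen. This must be done in the order $u\ge0$, $\sigma\ge0$, $u\le\alpha$, $\sigma\le C_\sigma$; the resulting bounds rule out blow-up and give extension to $[0,T]$.

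The paper's route is shorter. However, it tacitly needs $\rho$ (given only on $[0,\infty)$) and $\sigma/(k_{ox}+\sigma)$ to make sense before $\sigma\ge 0$ is established. Your cut-off buys three things:
\begin{itemize}
\item a field defined on all of $\mathbb{R}^2$;
\item four bounds that are logically independent of one another, since the truncated coefficients already lie in the box;
\item explicit positive-part estimates in place of the cited comparison principle.
\end{itemize}
The price is the extra step identifying the truncated solution with a solution of \eqref{eq:SistemaControlado}.
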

\begin{proof}
Let $[\ControlC,\ControlS]\in L^\infty(0,T)^2$ be fixed, with $0\leq \ControlC(t),\ControlS(t)\leq 1$ a.e. $t\in[0,T]$. We first observe that the right-hand side of system \eqref{eq:SistemaControlado} is measurable in $t$ and locally Lipschitz with respect the state variables $[u,\sigma]$. Indeed, the controls $\ControlC$ and $\ControlS$ are measurable and bounded, the function $\rho$ is of class $C^1$ and therefore locally Lipschitz, and the term $\frac{\sigma}{k_{ox}+\sigma}$ is locally Lipschitz for $\sigma>-k_{ox}$. Hence, by the Carathéodory existence and uniqueness theory \cite[Chapter 2]{caratheodory}, there exists a unique maximal solution $[u,\sigma]$, absolutely continuous on $[0,T_{\max})$ for some maximal time $T_{\max}>0$ and satisfying \eqref{eq:SistemaControlado} at almost every $t$. We shall show that this solution remains bounded on $[0,T_{\max})$ and therefore, it can be extended to the interval $[0,T]$.\\

We can use a comparison argument to get the non-negativity and the upper bound of the solution. Indeed, observe that if $v(t)=0$, it satisfies $v'(t)=0=f_u(t,0,\sigma),$ and $v(0)=0\leq u_0.$ where $f_u(t,u,\sigma) = \rho(\sigma)u(\alpha - u) - \kappa \ControlC u \sigma$. Thus, $v(t)$ is a subsolution of \eqref{eq:SistemaControlado}$_1$ and therefore, by a comparison argument, we obtain $0\leq u(t)$ a.e. $t\in [0,T].$ Consider $f_\sigma(t,u,\sigma)=P_{er}S_v(\beta-\sigma(t))-\frac{A_{ox}u(t)\sigma(t)}{k_{ox}+\sigma(t)}+(1-\ControlS(t))S_cu(t).$ Then, \eqref{eq:SistemaControlado}$_2$ is expressed as $\sigma'(t)=f_\sigma(t,u(t),\sigma(t)).$ 
Using that $u$ is nonnegative and $0\leq \ControlS \leq 1$,  the constant function $v(t)=0$ satisfies $v'(t)=0\leq f_\sigma(t,u(t),0),\ v(0)=0\leq \sigma_0.$ Again, applying a comparison argument, we conclude that $0\leq \sigma(t)$ a.e. $t\in [0,T].$ Next, we prove the upper bound for $u$. Since $u\geq 0$, $\sigma\geq 0$ and $\ControlC\geq 0$, from \eqref{eq:SistemaControlado}$_1$ we have
\[
u'(t)=\rho(\sigma(t))u(t)(\alpha-u(t))-\kappa \ControlC(t)u(t)\sigma(t)
\leq \rho(\sigma(t))u(t)(\alpha-u(t)).
\]
The constant function $\alpha$ solves the scalar equation
\[
\begin{cases}
    y'(t)=\rho(\sigma(t))y(t)(\alpha-y(t)),\\
    y(0)=\alpha.
\end{cases}
\]
Since $u_0\leq \alpha$, by comparison  we get that $u(t)\leq \alpha$ a.e. $t\in [0,T].$  Consequently, $0\leq u(t)\leq \alpha.$ Finally, we derive the upper bound for $\sigma$. Using $u\leq \alpha$, $0\leq \ControlS\leq 1$, and the non-negativity of $u$ and $\sigma$, from \eqref{eq:SistemaControlado}$_2$ we obtain
\[
\sigma'(t)
= P_{er}S_v(\beta-\sigma(t))
-\frac{A_{ox}u(t)\sigma(t)}{k_{ox}+\sigma(t)}
+(1-\ControlS (t))S_cu(t)
\leq P_{er}S_v(\beta-\sigma(t))+S_c\alpha.
\]
Let $C_\sigma:=\beta+\frac{S_c\alpha}{P_{er}S_v}$ and let $z(t)$ be the solution of
\[
\begin{cases}
    z'(t)=P_{er}S_v(C_\sigma-z(t)),\\
    z(0)=C_\sigma.
\end{cases}
\]
Since $\sigma_0\leq C_\sigma$ and $\sigma$ is a subsolution of the previous equation, by a comparison argument, we get $\sigma(t)\leq z(t)=C_\sigma$. Therefore $0 \leq \sigma \leq C_\sigma$ a.e. $t \in [0,T]$.

\end{proof}

In order to establish the optimal control problem, we introduce the following cost functional
\begin{equation}\label{funcionalObjetivo}
    \begin{split}
        J([u(\ControlC,\ControlS), \sigma(\ControlC,\ControlS), \ControlC,\ControlS]) 
        &= \int_0^T \Big[\frac{k_1}{2}u^2 + \frac{k_2}{2} \left(\sigma - \sigma_d \right)^2 + k_3\ControlC + k_4\ControlS\Big](t)dt \\ 
        & \ \ \ + \frac{l_1}{2}u(T)^2 + \frac{l_2}{2}\left(\sigma(T)-\sigma_{dT}\right)^2,
    \end{split}
\end{equation}
where $k_i(t) \in L^\infty(0,T)$ $(i=1,2,3,4)$ are positive and bounded weighting functions, and $l_j \in \mathbb{R}$ $(j=1,2)$ are positive real values. In addition, $\sigma_d(t) \in L^\infty(0,T)$ and $\sigma_{dT} \in \mathbb{R}$ represent the desired state for oxygen levels in $(0,T)$ and at the final time $T$, respectively. Additionally, we define the set of admissible controls $[\ControlC, \ControlS]$, as
\[
\mathcal{U}_{ad} = \Big\{[\ControlC, \ControlS] \in L^\infty(0,T)^2 : 0\leq \ControlC, \ControlS \leq 1, \int_0^T \ControlC(t) dt \leq c_{\max}\Big\},
\]
where $c_{\max}>0$ denotes the maximum total amount of therapy that can realistically be administered with standard drugs. The condition $c_{\max}<T$ means that the treatment cannot be applied at its maximum level for the entire time interval, which is a condition that responds to the reality of the problem. Since the states depend of the controls, we consider the reduced functional by
\[
\widehat{J}([\ControlC,\ControlS]) := J([u(\ControlC,\ControlS),\sigma(\ControlC,\ControlS),\ControlC,\ControlS]).
\]
Therefore, the optimization problem we want to analyze is defined as follows:
\begin{equation}\label{eq:ProblemaDeControl}
    \textnormal{(P)}\quad
\left\{
\begin{array}{ll}
\displaystyle 
\min
& \widehat J([\ControlC,\ControlS])
:=J([u(\ControlC,\ControlS),\sigma(\ControlC,\ControlS),\ControlC,\ControlS]) \\[1.5ex]
\textnormal{s.t.}
&
\left\{
\begin{array}{l}
[u(\ControlC,\ControlS),\sigma(\ControlC, \ControlS)] \textnormal{ is the solution of the state system } \eqref{eq:SistemaControlado} \\[0.4ex]
\textnormal{associated with the admissible controls } [\ControlC,\ControlS]\in\mathcal U_{\mathrm{ad}}.
\end{array}
\right.
\end{array}
\right.
\end{equation}
If the minimum is just in a neighborhood, then it is a {\it local optimal control.}
\subsection{Existence of optimal control}
We present the following result regarding the existence of at least one optimal control for the optimal control problem \eqref{eq:ProblemaDeControl}:
\begin{theo}
    Under the hypotheses of Theorem \ref{teo:ExistenciaSolClasica}, there is at least a global optimal solution $[\ControlC^*, \ControlS^*] \in \mathcal{U}_{ad}$ for the controlled system \eqref{eq:SistemaControlado}, that is,
    \[
    \widehat J([\ControlC^*, \ControlS^*]) = \min_{[\ControlC,\ControlS]\in\mathcal{U}_{ad}}\widehat J([\ControlC,\ControlS]).
    \]
\end{theo}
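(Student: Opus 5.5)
We will use the direct method of the calculus of variations. First, $\widehat J$ is bounded below on $\mathcal U_{ad}$: the integrand and the terminal terms are nonnegative because $k_i>0$, $l_j>0$ and $\ControlC,\ControlS\geq 0$. Moreover, $\mathcal U_{ad}$ is nonempty, since $[0,0]\in\mathcal U_{ad}$. Hence $m:=\inf_{\mathcal U_{ad}}\widehat J$ is finite, and we can take a minimizing sequence $[\ControlC_n,\ControlS_n]\in\mathcal U_{ad}$ with $\widehat J([\ControlC_n,\ControlS_n])\to m$.

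\textbf{Step 1: compactness of the controls.} The set $\mathcal U_{ad}$ is bounded in $L^\infty(0,T)^2$. It is also convex and closed under weak-$*$ convergence: the pointwise box constraints pass to weak-$*$ limits, and so does the linear constraint $\int_0^T\ControlC\,dt\le c_{\max}$, tested against the function $1\in L^1$. By Banach--Alaoglu we extract a subsequence with $\ControlC_n\rightharpoonup\ControlC^*$ and $\ControlS_n\rightharpoonup\ControlS^*$ weakly-$*$ in $L^\infty(0,T)$, and $[\ControlC^*,\ControlS^*]\in\mathcal U_{ad}$.

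\textbf{Step 2: compactness of the states.} Let $[u_n,\sigma_n]$ be the states associated with $[\ControlC_n,\ControlS_n]$. By Theorem \ref{teo:ExistenciaSolClasica} they satisfy $0\le u_n\le\alpha$ and $0\le\sigma_n\le C_\sigma$. Using these bounds in \eqref{eq:SistemaControlado}, together with the continuity of $\rho$ on $[0,C_\sigma]$, we get that $u_n'$ and $\sigma_n'$ are bounded in $L^\infty(0,T)$ uniformly in $n$. The sequences are therefore equibounded and equi-Lipschitz, and Arzel\`a--Ascoli gives a further subsequence with $u_n\to u^*$ and $\sigma_n\to\sigma^*$ uniformly on $[0,T]$.

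\textbf{Step 3: passage to the limit in the state equations.} This is the main step, because the controls enter multiplied by the states while converging only weakly-$*$. We write the equations in integral form,
\[
u_n(t)=u_0+\int_0^t\big[\rho(\sigma_n)u_n(\alpha-u_n)-\kappa\ControlC_n u_n\sigma_n\big]ds,
\]
and similarly for $\sigma_n$. Every term that does not involve a control converges uniformly, by the uniform convergence of the states and the local Lipschitz continuity of $\rho$ and of $\sigma\mapsto\sigma/(k_{ox}+\sigma)$ on $[0,C_\sigma]$. For a control term we split
\[
\ControlC_n u_n\sigma_n=\ControlC_n(u_n\sigma_n-u^*\sigma^*)+\ControlC_n u^*\sigma^*.
\]
The first piece tends to $0$ in $L^1$, since $|\ControlC_n|\le 1$ and $u_n\sigma_n\to u^*\sigma^*$ uniformly. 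For the second piece, $u^*\sigma^*\chi_{(0,t)}\in L^1(0,T)$, so weak-$*$ convergence gives $\int_0^t\ControlC_n u^*\sigma^*\,ds\to\int_0^t\ControlC^* u^*\sigma^*\,ds$. The same splitting handles $(1-\ControlS_n)S_c u_n$. Hence $[u^*,\sigma^*]$ solves \eqref{eq:SistemaControlado} with the controls $[\ControlC^*,\ControlS^*]$, and by the uniqueness part of Theorem \ref{teo:ExistenciaSolClasica} it is the associated state.

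\textbf{Step 4: lower semicontinuity of the cost.} The state-dependent terms of $J$ converge to their values at the limit. This follows from uniform convergence, including at $t=T$ for the terminal terms, and from $k_1,k_2,\sigma_d\in L^\infty$. The control terms $\int_0^T k_3\ControlC_n\,dt$ and $\int_0^T k_4\ControlS_n\,dt$ are linear in the controls with $k_3,k_4\in L^\infty\subset L^1$, so they converge by weak-$*$ convergence. Therefore $\widehat J([\ControlC_n,\ControlS_n])\to\widehat J([\ControlC^*,\ControlS^*])$, so $\widehat J([\ControlC^*,\ControlS^*])=m$, and $[\ControlC^*,\ControlS^*]$ is a global optimal control.
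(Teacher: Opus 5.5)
Your proof is correct, but it takes a different route from the paper. The paper does not carry out the direct method itself. It invokes a general existence theorem for optimal control problems (Theorem 23.11 in Clarke's monograph) and checks its hypotheses (a)--(f) in a few lines: bounded trajectories, closed convex control set, continuous endpoint cost, convex running cost, fixed initial data, and an admissible process with finite cost.

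You instead prove existence from first principles. You use weak-$*$ sequential compactness of $\mathcal U_{ad}$ in $L^\infty(0,T)^2$; it is sequential because $L^1(0,T)$ is separable. You apply Arzel\`a--Ascoli to the states using the a priori bounds of Theorem \ref{teo:ExistenciaSolClasica}. You then use a strong--weak splitting to pass to the limit in the bilinear terms $\ControlC_n u_n\sigma_n$ and $\ControlS_n u_n$.

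Your route is self-contained and shows exactly which structure is used. The controls enter both the dynamics and the cost affinely, so $\widehat J$ is in fact sequentially weak-$*$ continuous on $\mathcal U_{ad}$, not only lower semicontinuous. You also treat the integral constraint $\int_0^T\ControlC\,dt\le c_{\max}$ directly as a weak-$*$ closed condition. Existence theorems of Clarke's type are usually stated for pointwise constraints $u(t)\in U(t)$. Applying one literally would first require absorbing that constraint, for instance through an augmented state $y'=\ControlC$, $y(0)=0$, $y(T)\le c_{\max}$, a step the paper leaves implicit.

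The paper's approach gains brevity and generality. The cited theorem also covers dynamics that depend nonlinearly on the control under the usual Filippov--Cesari convexity hypotheses. There your strong--weak limit argument would not apply directly without a relaxation step.
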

\begin{proof}
    The result is obtained by applying Theorem 23.11 in \cite{Clarke}. For that, we verify assumptions (a)--(f) for the functional \(\widehat J\) according the statement of Theorem 23.11 in \cite{Clarke}. Since the solutions of the system \eqref{eq:SistemaControlado} are bounded, then the condition (a) is satisfied. Moreover, the set of admissible controls $\mathcal{U}_{ad}$ is closed and convex, and the terminal cost is continuous with respect to the state variables $[u,\sigma]$. Therefore, the conditions (b), (c), and (f) are satisfied. Since the cost function is convex, continuous and since $u$ and $\sigma$ are bounded then the condition (d) is satisfied. Condition (e) follows directly from the given initial conditions. Moreover, any $[\ControlC, \ControlS] \in \mathcal{U}_{ad}$ yields an admissible process with finite $\widehat J$, completing the hypothesis of Theorem 23.11.
\end{proof}

\subsection{First-order necessary optimality conditions}

Now, we derive the first-order necessary optimality conditions associated with the optimal control problem. For this, we apply the Pontryagin Minimum Principle, which provides a description of the
optimal controls in terms of the Hamiltonian function and the associated
adjoint system.\\

The Hamiltonian associated with the optimal control problem \eqref{eq:ProblemaDeControl} is defined by
\begin{equation}\label{eq:Hamiltonian_PMP}
\begin{aligned}
\mathcal H(u,\sigma,\ControlC,\ControlS,p_1,p_2,\lambda_0)
&=
p_1\Big(\rho(\sigma)u(\alpha-u)-\kappa \ControlC u\sigma\Big)\\
&\quad
+p_2\left(
P_{er}S_v(\beta-\sigma)
-\frac{A_{ox}u\sigma}{k_{ox}+\sigma}
+(1-\ControlS)S_cu
\right)\\
&\quad
+\lambda_0
\left[
\frac{k_1}{2}u^2
+
\frac{k_2}{2}(\sigma-\sigma_d)^2
+
k_3\ControlC+k_4\ControlS
\right],
\end{aligned}
\end{equation}
where $p_1$ and $p_2$ are the adjoint variables associated with the state variables $u$ and $\sigma$, respectively, and $\lambda_0 \geq 0$ is the multiplier related to the cost functional. By the Pontryagin Minimum Principle, there exist $\lambda_0 \geq 0$ and an absolutely continuous adjoint variable $p=(p_1,p_2) : [0,T] \to \mathbb{R}^2$ not identically zero, such that the following conditions hold:
\begin{enumerate}
\item The adjoint variables satisfy the adjoint system. This system is obtained from
\[
-p_1'(t)=\partial_u\mathcal H(u^*,\sigma^*,\ControlC^*,\ControlS^*,p_1,p_2)(t),
\qquad
-p_2'(t)=\partial_\sigma\mathcal H(u^*,\sigma^*,\ControlC^*,\ControlS^*,p_1,p_2)(t),
\]
together with the terminal conditions induced by the terminal cost. Consequently, we obtain the adjoint system:
\begin{equation}\label{eq:adjoint_system_ode}
\left\{
\begin{aligned}
-p'_1
&=
\lambda_0 k_1u^*
+
\left(
\rho(\sigma^*)(\alpha-2u^*)
-\kappa \ControlC^*\sigma^*
\right)p_1 + \left(
-\frac{A_{ox}\sigma^*}{k_{ox}+\sigma^*}
+(1-\ControlS^*)S_c
\right)p_2,
\\[1mm]
-p'_2
&=
\lambda_0 k_2\big(\sigma^*-\sigma_d\big)
+
\left(
\rho'(\sigma^*)u^*(\alpha-u^*)
-\kappa \ControlC^*u^*
\right)p_1 
+
\left(
-P_{er}S_v
-\frac{A_{ox}u^*k_{ox}}{(k_{ox}+\sigma^*)^2}
\right)p_2,
\\[1mm]
p_1(T)& =\lambda_0 l_1u^*(T), \qquad
p_2(T)=\lambda_0 l_2\left(\sigma^*(T)-\sigma_{dT}\right).
\end{aligned}
\right.
\end{equation}
\item The nontriviality condition holds:
\begin{equation}\label{eq:PontryaginNoTrivialidad}
(\lambda_0, p(t)) \neq (0,0) \quad \text{for every }t\in [0,T].
\end{equation}
Using the uniqueness of the solution to the adjoint system \eqref{eq:adjoint_system_ode}, it is possible to show that the multiplier $\lambda_0$ is nonzero. Indeed, suppose by contradiction that $\lambda_0 = 0$. In that case, the terminal conditions in \eqref{eq:adjoint_system_ode} give $p_1(T) = p_2(T) = 0$, and the adjoint system becomes the linear homogeneous system
\[
\left\{
\begin{aligned}
-p'_1
&=
\left(
\rho(\sigma^*)(\alpha-2u^*)
-\kappa \ControlC^*\sigma^*
\right)p_1 + \left(
-\frac{A_{ox}\sigma^*}{k_{ox}+\sigma^*}
+(1-\ControlS^*)S_c
\right)p_2,
\\[1mm]
-p'_2
&=
\left(
\rho'(\sigma^*)u^*(\alpha-u^*)
-\kappa \ControlC^*u^*
\right)p_1 
+
\left(
-P_{er}S_v
-\frac{A_{ox}u^*k_{ox}}{(k_{ox}+\sigma^*)^2}
\right)p_2,
\\[1mm]
p_1(T)&=p_2(T)=0.
\end{aligned}
\right.
\]
Since $u^*$ and $\sigma^*$ are bounded, $\rho\in C^1([0,\infty))$ with $\rho'$ bounded, $\sigma^*\geq 0$, and $\ControlC^*,\ControlS^*\in L^\infty(0,T)$, the coefficients of this system belong to $L^\infty(0,T)$. Hence the terminal value problem has a unique absolutely continuous solution, which is $p_1\equiv p_2\equiv 0$ on $[0,T]$. Therefore $(\lambda_0,p(t))=(0,0)$, which contradicts the non-triviality condition \eqref{eq:PontryaginNoTrivialidad}. Consequently, $\lambda_0>0$.

\item The Hamiltonian condition is satisfied over the admissible set $\mathcal{U}_{ad}$. Since this set contains the global restriction
\[
\int_0^T\ControlC(t)\,dt\leq c_{\max},
\]
we write this condition in variational form.
\end{enumerate}
Since $\lambda_0>0$, without loss of generality, we take $\lambda_0=1$, so that \eqref{eq:adjoint_system_ode} (with $\lambda_0=1$) is the adjoint system that will be used from now on, both in the optimality condition below and in the numerical approximation of Section 4. Then, the optimality condition is written as

\begin{equation}\label{eq:VI_PMP_expanded}
\int_0^T
\left[
\left(k_3-\kappa u^*\sigma^*p_1\right)
\big(\ControlC-\ControlC^*\big)
+
\left(k_4-S_cu^*p_2\right)
\big(\ControlS-\ControlS^*\big)
\right]dt
\geq 0,
\end{equation}
for every $[\ControlC,\ControlS]\in\mathcal U_{ad}$. Note that $k_3-\kappa u^*\sigma^*p_1$ and  $k_4 - S_c u^*p_2$ correspond to the gradient of $\widehat{J}$ with respect to the control $\ControlC$ and $\ControlS$, respectively, namely,
\begin{equation}\label{eq:gradientes}
\nabla_\ControlC \widehat{J}([\ControlC^*,\ControlS^*]) = k_3(t)-\kappa u^*(t)\sigma^*(t)p_1(t), \qquad \nabla_\ControlS \widehat{J}([\ControlC^*,\ControlS^*]) = k_4(t)-S_c u^*(t)p_2(t).
\end{equation}

The projection onto $\mathcal{U}_{ad}$ is therefore not purely pointwise for the cytotoxic control $\ControlC$, because of the integral restriction. We will use the following projection result.

\begin{prop}\label{prop:proyeccion}(See \cite{Fernandez-RestriccionGlobalControl})
Let us suppose that $v_{\max},y_{\max}\in(0,+\infty)$ and
$\mu\in L^2(0,\tau)$, with $\mu(t)>0$ a.e. $t\in[0,\tau]$, verifying
\begin{equation}\label{eq:rho-condition}
    \int_0^\tau \mu(t)\,dt > \frac{y_{\max}}{v_{\max}}.
\end{equation}

Let us consider the set
\[
\mathbb{K}
=
\left\{
v\in L^2(0,\tau):
0\leq v(t)\leq v_{\max}\ \text{a.e. } t\in[0,\tau],
\quad
\int_0^\tau v(t)\mu(t)\,dt\leq y_{\max}
\right\},
\]
and denote by $\operatorname{Proj}_{\mathbb{K}}$ the projection operator onto
$\mathbb{K}$. For each $h\in L^2(0,\tau)$, we have that:
\begin{enumerate}
    \item If $ \int_0^\tau \operatorname{Proj}_{[0,v_{\max}]}(h(t))\mu(t)\,dt \leq y_{\max},$
    then $\operatorname{Proj}_{\mathbb{K}}(h)=\operatorname{Proj}_{[0,v_{\max}]}(h).$

    \item If $\int_0^\tau \operatorname{Proj}_{[0,v_{\max}}](h(t))\mu(t)\,dt > y_{\max},$ then there exists $\gamma>0$ such that
    \[
        \operatorname{Proj}_{\mathbb{K}}(h) = \operatorname{Proj}_{[0,v_{\max}]}\bigl(h-\gamma \mu\bigr), \qquad \text{and} \qquad \int_0^\tau \operatorname{Proj}_{\mathbb{K}}(h)(t)\mu(t)\,dt = y_{\max}.
    \]
\end{enumerate}
\end{prop}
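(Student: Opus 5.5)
The plan is to characterize $\operatorname{Proj}_{\mathbb{K}}(h)$ through the variational inequality for projections onto closed convex sets in the Hilbert space $L^2(0,\tau)$. First, $\mathbb{K}$ is nonempty (it contains $0$), convex, and closed in $L^2(0,\tau)$, since pointwise box constraints and the linear integral constraint are preserved under $L^2$ limits along an a.e.-convergent subsequence. Note that $\mu\in L^2$ and $v\in L^\infty$, so $\int v\mu$ is well defined and continuous. Hence the projection $w=\operatorname{Proj}_{\mathbb{K}}(h)$ exists, is unique, and is characterized by
\[
\int_0^\tau (h-w)(v-w)\,dt\leq 0 \qquad \text{for all } v\in\mathbb{K}.
\]
I will also use the elementary fact that $\operatorname{Proj}_{[0,v_{\max}]}(g)$, applied pointwise, is the $L^2$ projection onto the box set $\mathbb{B}=\{0\le v\le v_{\max}\}$.

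\emph{Case 1.} Let $w_0=\operatorname{Proj}_{[0,v_{\max}]}(h)$. It minimizes $\|h-v\|_{L^2}$ over $\mathbb{B}\supset\mathbb{K}$. If moreover $\int w_0\mu\le y_{\max}$, then $w_0\in\mathbb{K}$, so $w_0$ also minimizes over $\mathbb{K}$, and uniqueness gives $w=w_0$.

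\emph{Case 2.} Suppose $\int w_0\mu>y_{\max}$. Define $\phi(\gamma)=\int_0^\tau \operatorname{Proj}_{[0,v_{\max}]}(h-\gamma\mu)\mu\,dt$ for $\gamma\ge0$.

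\begin{itemize}
\item $\phi$ is continuous, by dominated convergence: the integrand is bounded by $v_{\max}\mu\in L^1$, and the projection is $1$-Lipschitz.
\item $\phi$ is nonincreasing in $\gamma$, because $\mu>0$ and the projection is monotone.
\item $\phi(0)>y_{\max}$ by assumption.
\item $\phi(\gamma)\to 0$ as $\gamma\to\infty$: since $\mu>0$ a.e., $h-\gamma\mu\to-\infty$ a.e., so the projection tends to $0$ a.e., and dominated convergence applies.
\end{itemize}

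Since $y_{\max}>0$, the intermediate value theorem gives $\gamma>0$ with $\phi(\gamma)=y_{\max}$. Condition \eqref{eq:rho-condition} guarantees that the constraint is not vacuous: the constant $v_{\max}$ violates it, so $\mathbb{K}$ is a proper subset of $\mathbb{B}$. It is not otherwise essential to this argument.

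Set $w=\operatorname{Proj}_{[0,v_{\max}]}(h-\gamma\mu)$. Then $w\in\mathbb{K}$ and $\int w\mu=y_{\max}$. To verify the variational inequality, take $v\in\mathbb{K}$ and split
\[
\int (h-w)(v-w)=\int (h-\gamma\mu-w)(v-w)+\gamma\int \mu(v-w).
\]
The first term is $\le 0$ pointwise, by the characterization of the scalar projection onto $[0,v_{\max}]$ with $v(t)\in[0,v_{\max}]$. The second term equals $\gamma\bigl(\int v\mu - y_{\max}\bigr)\le 0$. Hence $w=\operatorname{Proj}_{\mathbb{K}}(h)$.

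The main obstacle I expect is the existence of $\gamma$ in Case 2, specifically the limit $\phi(\gamma)\to0$ and the continuity of $\phi$. Both rest on $\mu>0$ a.e. together with integrability, via dominated convergence. The rest is a direct Lagrange-multiplier (KKT) verification.
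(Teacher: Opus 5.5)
Your proof is correct and complete. The paper does not prove this proposition; it quotes it from \cite{Fernandez-RestriccionGlobalControl}, so there is no in-text argument to match. Your route is the standard self-contained one: characterize $\operatorname{Proj}_{\mathbb{K}}(h)$ by the variational inequality, obtain the multiplier $\gamma$ from the intermediate value theorem applied to the continuous nonincreasing map $\phi$, and verify the inequality by splitting off $\gamma\mu$.

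Compared with the bare citation, your version makes three things explicit.

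First, hypothesis \eqref{eq:rho-condition} is not needed for either conclusion. If it fails, then $\int_0^\tau v\mu\,dt\le v_{\max}\int_0^\tau\mu\,dt\le y_{\max}$ for every $v$ in the box, so item 2 is vacuous. In the paper's application ($\tau=T$, $\mu\equiv1$, $v_{\max}=1$, $y_{\max}=c_{\max}$) it is just the modeling assumption $c_{\max}<T$.

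Second, the properties you prove for $\phi$ are exactly what the projection step in Algorithm \ref{alg:Projection} needs. These are continuity, monotonicity, and $\phi(\gamma)<y_{\max}$ for large $\gamma$, since $\phi(\gamma)\to0$ and $y_{\max}>0$. The algorithm's function is your $\phi$ minus $c_{\max}$. Your properties guarantee that the doubling search for $\gamma_{high}$ terminates and that the bisection converges to a root.

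Third, your verification of the variational inequality uses only $\phi(\gamma)=y_{\max}$, so \emph{every} root gives the projection. Hence, where $\phi$ is flat and $\gamma$ is not unique, all roots produce the same projected control, and it does not matter which one the bisection returns.
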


%%%%%%%%%%%%%%%%%%%%%%%%%%%%%%%%%%%%%%%%%%%%%%%
% CAPÍTULO 3: PINNs
%%%%%%%%%%%%%%%%%%%%%%%%%%%%%%%%%%%%%%%%%%%%%%% 
\section{Preliminaries of PINNs}

In this section we introduce a general mathematical framework for the neural networks that we used in this paper. We consider the following general system of $d$ first-order non-linear ODE:
\begin{equation}\label{eq:EDO_abs}
\begin{cases}
\mathbf{u}'(t) = \mathbf{F}(t,\mathbf{u}(t)), \qquad \forall t \in [t_0, T],\ T \geq t_0,\\
\mathbf{u}(t_0) = \mathbf{u}_0,
\end{cases}
\end{equation}
where $\mathbf{u}_0 \in \mathbb{R}^d$ is the initial condition, $\mathbf{u} : [t_0,T] \to \mathbb{R}^d$ is the unknown state and $\mathbf{F} : [t_0, T] \times \mathbb{R}^d \to \mathbb{R}^d$ is a given the nonlinear function. Therefore, for convenience, we introduce the nonlinear differential operator
\[
\mathcal{R}[\mathbf{u}]:= \mathbf{u}'(t) - \mathbf{F}(t,\mathbf{u}(t)), \quad \forall t \in [t_0, T].
\]
Thus, solving the system \eqref{eq:EDO_abs} is equivalent to finding $\mathbf{u}\in [C^1([t_0, T])]^d$ such that 
\[
\mathcal{R}[\mathbf{u}](t) = \mathbf{0}, \quad \forall t\in [t_0, T], \qquad {\mbox{and}}\ \qquad \mathcal{I}[\mathbf{u}]:= \mathbf{u}(t_0) - \mathbf{u}_0 = \mathbf{0}.
\]

\subsection{Fully Connected Feedforward Neural Network} The objective is to train a neural network that approximates the solution to system \eqref{eq:EDO_abs}; for that, let $D \geq 2$ be the depth of the network, and consider $n_0 = 1,$  $n_D = d$, and 
\[
n_1, \dots, n_{D-1} \in \mathbb{N}
\]
the widths of the hidden layers. The set of trainable parameters is denoted by $\theta := \{(W^l, b^l)\}^D_{l=1}$, where
\[
W^l \in \mathbb{R}^{n_l \times n_{l-1}}, \quad b^l \in \mathbb{R}^{n_l}, \qquad {\mbox{for}} \quad l = 1,\dots, D.
\]
Therefore, given a $t \in [t_0,T]$, the network is defined recursively by
\[
a^{(0)}(t):= t,
\]
and 
\[
z^{(l)}(t) := W^{l}a^{(l-1)}(t)+b^l, \qquad a^{(l)}(t) := \varphi_l (z^{(l)}(t)), \qquad {\mbox{for}} \quad l=1,\dots, D-1,
\]
where $\varphi_l$ denotes the activation function of the $l$-th hidden layer. The output layer is then given by
\[
\mathcal{N}_\theta (t) := W^Da^{(D-1)}(t) + b^D \in \mathbb{R}^d.
\]
Thus, we can define a trial function constructed from $\mathcal{N}_\theta$, denoted by $\mathbf{u}_\theta : [t_0, T] \to \mathbb{R}^d$, used as an approximation for the exact solution $\mathbf{u}$.

\subsection{Physics-Informed Neural Networks (PINNs)} In general, in the context of the Physics-Informed Neural Network, one can train the Neural Network $\mathcal{N}_\theta$ by minimizing the following loss function:
\begin{eqnarray}\label{loss1}
\mathcal{L}(\theta) = \omega_{eq} \mathcal{L}_{eq}(\theta) + \omega_{init}\mathcal{L}_{init}(\theta),
\end{eqnarray}
where
\[
\begin{split}
&\mathcal{L}_{eq}(\theta) := \frac{1}{N_r} \sum^{N_r}_{i=1} \|\mathcal{R}[\mathbf{u}_\theta](t_r^i)\|_2^2 = \frac{1}{N_r}\sum_{i=1}^{N_r} \|\partial_t \mathbf{u}_\theta(t_r^i) - \mathbf{F}(t^i_r,\mathbf{u}_\theta(t^i_r))\|_2^2, \qquad t_r^i\in \mathcal{T}_r,\\
&\mathcal{L}_{init}(\theta) := \|\mathcal{I}[\mathbf{u}_\theta]\|^2_{2} = \|\mathbf{u}_\theta(t_0) - \mathbf{u}_0\|^2_2,
\end{split}
\]
with $\mathcal{L}_{eq}(\theta)$, $\mathcal{L}_{init}(\theta)$ representing the residuals of the differential equation and the initial condition, respectively. Here
$\mathcal{T}_r := \{t_r^i\}^{N_r}_{i=1} \subset [t_0, T]$ denote the set of $N_r$ residual collocation points, and $\omega_{eq}$ and $\omega_{init}$ are the loss weighting coefficients.

\subsection{Sequential decomposition in time}\label{sec:sequential} 
Standard PINNs can be difficult to train over long time intervals, since that a network has to represent the entire path at once \cite{RaissiPerdikarisKarniadakis2019}. This is especially relevant in time-dependent problems, where the solution at later times depends on the quality of the approximation at earlier times \cite{WangSankaranPerdikaris2024}. A common way to reduce this difficulty is to divide the domain and train local neural networks, as in Conservative Physics-Informed Neural Network (cPINNs), Extended Physics-Informed Neural Network (XPINNs), and related parallel PINN methods \cite{JagtapKarniadakis2020, JagtapKharazmiKarniadakis2020, ShuklaHuJagtapKarniadakis2021}. However, our problem is a purely time-dependent initial value problem. Therefore, the main difficulty lies not in the spatial interfaces or the flow transmission conditions, but in the stable propagation of information from one time interval to the next. For this reason, we use a sequential time decomposition proposed in \cite{Roy}, where the interval is divided into consecutive subintervals, and a local neural network is trained on each of them. The approximation obtained at the end of a subinterval is used to initialize the next one. Thus, let
\[
t_0 = \tau_0 < \tau_1 < \cdots < \tau_M = T
\]
be a partition of $[t_0, T]$ and consider the set of subintervals 
\begin{equation}\label{eq:Im}
I_m = [\tau_{m-1}, \tau_{m}], \quad 1\leq m \leq M.
\end{equation}
Then, on each subinterval $[\tau_{m-1}, \tau_m]$, we consider a local neural network
\[
\mathcal{N}^{(m)}_{\theta_m} : I_m \to \mathbb{R}^d
\]
and a local trial function $\mathbf{u}_\theta^{(m)} : I_m \to \mathbb{R}^d$. The collection of trainable parameters is denoted by $\Theta := \{\theta_1, \dots,\theta_M\}$ and, for each $m = 1, \dots, M$, we denote the local residual collocation points set by $\mathcal{T}_r^{(m)} := \{t_{r,m}^i\}^{N_r^{(m)}}_{i=1} \subset I_m$. When the time interval is divided, the local approximations must agree at the connection points $\tau_1, \dots, \tau_{M-1}$. Otherwise, the global approximation may have jumps between consecutive intervals. We consider two ways to impose this connection: a soft-constrained formulation, where the mismatch is penalized in the loss function, and a hard-constrained formulation, where the connection is built into the trial function \cite{Roy}. This leads to consider the Sequential Soft-Constrained PINNs and the Hard-Constrained Sequential Constrained PINNs as described below. 
\begin{figure}[H]
\resizebox{\linewidth}{!}{
\begin{tikzpicture}[x=1cm,y=1cm,>=Latex,line cap=round,line join=round]
  \definecolor{blueacc}{RGB}{20,70,255}
  \tikzset{
    time/.style={blueacc, line width=0.8pt},
    downarr/.style={blueacc,-{Latex[length=3mm,width=2mm]}, line width=0.8pt},
    brace/.style={blueacc,decorate,decoration={brace,mirror,amplitude=4pt,raise=1.8pt},line width=0.8pt},
    intlabel/.style={font=\normalsize},
    netnode/.style={circle, draw=black, line width=0.6pt, fill=white, minimum size=0.34cm, inner sep=0pt},
    tnode/.style={circle, draw=blueacc, line width=0.8pt, fill=white, minimum size=0.52cm, inner sep=0pt},
    netline/.style={draw=black, line width=0.55pt},
  }

  % timeline
  \coordinate (T0) at (0,0);
  \coordinate (T1) at (3.5,0);
  \coordinate (T2) at (7.0,0);
  \coordinate (T3) at (10.5,0);
  \coordinate (TMm) at (14.0,0);
  \coordinate (TM) at (17.5,0);
  \draw[time,-{Latex[length=3.6mm,width=2.8mm]}] (-0.7,0) -- (18.3,0);
  \foreach \P/\lab in {T0/{t_0=\tau_0},T1/{\tau_1},T2/{\tau_2},T3/{\tau_3},TMm/{\tau_{M-1}},TM/{\tau_M=T}}{
     \draw[time] ($ (\P)+(0,-0.24) $) -- ($ (\P)+(0,0.24) $);
     \node[font=\normalsize] at ($ (\P)+(0,0.75) $) {$\lab$};
  }
  \node[font=\normalsize] at (12.25,0.78) {$\cdots$};
  \node[font=\normalsize] at (18.0,-0.03) {$t$};

  % braces and interval labels
  \foreach \A/\B/\txt in {T0/T1/{[t_0,\tau_1]},T1/T2/{[\tau_1,\tau_2]},T2/T3/{[\tau_2,\tau_3]},TMm/TM/{[\tau_{M-1},\tau_M]}}{
    \draw[brace] ($ (\A)+(0,-0.34) $) -- ($ (\B)+(0,-0.34) $);
    \node[intlabel] at ($ (\A)!0.5!(\B)+(0,-0.98) $) {$\txt$};
  }
  \node[font=\normalsize] at (12.25,-0.98) {$\cdots$};

  % positions for networks
  \coordinate (N1c) at (1.75,-5.25);
  \coordinate (N2c) at (5.25,-5.25);
  \coordinate (N3c) at (8.75,-5.25);
  \coordinate (NMc) at (15.75,-5.25);

  % arrows down + network labels
  \foreach \C/\lab in {N1c/{\mathcal{N}^{(1)}_{\theta_1}},N2c/{\mathcal{N}^{(2)}_{\theta_2}},N3c/{\mathcal{N}^{(3)}_{\theta_3}},NMc/{\mathcal{N}^{(M)}_{\theta_M}}}{
     \draw[downarr] ($ (\C)+(0,3.85) $) -- ($ (\C)+(0,2.95) $);
     \node[font=\normalsize] at ($ (\C)+(0,2.35) $) {$\lab$};
  }

  \node[font=\LARGE] at (12.25,-5.05) {$\cdots$};

  % network drawing macro
  \newcommand{\drawnet}[3][1]{%
    \begin{scope}[shift={(#2)}]
      \begin{scope}[scale=#1, xshift=-0.1cm, yshift=0.75cm]
      \node[tnode] (t) at (-1.25,0) {$t$};
      \foreach \y/\name in {0.9/a1,0.4/a2,-0.1/a3,-0.9/a4}{\node[netnode] (\name) at (-0.35,\y) {};}
      \foreach \y/\name in {0.9/b1,0.4/b2,-0.1/b3,-0.9/b4}{\node[netnode] (\name) at (0.55,\y) {};}
      \foreach \y/\name in {0.9/c1,0.4/c2,-0.1/c3,-0.9/c4}{\node[netnode] (\name) at (1.45,\y) {};}
      \node[font=\scriptsize] at (-0.35,-0.3) {$\vdots$};
      \node[font=\scriptsize] at (0.55,-0.3) {$\vdots$};
      \node[font=\scriptsize] at (1.45,-0.3) {$\vdots$};
      % input to first layer
      \foreach \n in {a1,a2,a3,a4}{\draw[netline] (t) -- (\n);}
      % adjacent layer full connections
      \foreach \i in {a1,a2,a3,a4}{\foreach \j in {b1,b2,b3,b4}{\draw[netline] (\i) -- (\j);}}
      \foreach \i in {b1,b2,b3,b4}{\foreach \j in {c1,c2,c3,c4}{\draw[netline] (\i) -- (\j);}}
      \draw[downarr] (0.1,-1.35) -- (0.1,-2.05);
      \node[font=\normalsize] at (0.1,-2.7) {$#3$};
      \end{scope}
    \end{scope}
  }

  \drawnet{N1c}{\mathbf{u}^{(1)}_{\theta_1}(t),\ t\in[t_0,\tau_1]}
  \drawnet{N2c}{\mathbf{u}^{(2)}_{\theta_2}(t),\ t\in[\tau_1,\tau_2]}
  \drawnet{N3c}{\mathbf{u}^{(3)}_{\theta_3}(t),\ t\in[\tau_2,\tau_3]}
  \drawnet{NMc}{\mathbf{u}^{(M)}_{\theta_M}(t),\ t\in[\tau_{M-1},\tau_M]}

\end{tikzpicture}
}
\caption{Time partition in sequential PINNs.}
\label{fig:sequentialPinns}
\end{figure}

\subsubsection{Soft-Constrained Sequential PINNs (SCS-PINNs)}
In the soft-constrained formulation, the values of the local networks at the interfaces are not imposed {\it a priori}. Following the treatment of interface conditions in domain-decomposition PINNs \cite{JagtapKarniadakis2020, JagtapKharazmiKarniadakis2020, Roy}, continuity across consecutive subintervals is promoted by adding suitable penalty terms to the loss function. A simple choice for $\mathbf{u}_\theta$ is 
\begin{equation}\label{eq:soft_NN}
\mathbf{u}_\theta(t) := \mathcal{N}_\theta (t).
\end{equation}
In this case, the initial condition is not satisfied {\it a priori} and therefore it must be incorporated into the optimization functional. Thus, the corresponding soft-constrained loss is the same loss function defined in (\ref{loss1}), that is, 
\[
\mathcal{L}^{soft} (\theta) = \omega_{eq}\mathcal{L}_{eq}(\theta) + \omega_{init}\mathcal{L}_{init}(\theta).
\]
Therefore, in the sequential soft-constrained formulation, temporal continuity is weakly enforced using the loss functional. For each connection point $\tau_m$, $m=2, \dots, M$, we define the local loss function
\[
\mathcal{L}_{init}^{(m)}(\theta_m) := \|\mathbf{u}_{\theta_m}^{(m)}(\tau_{m-1})-\mathbf{u}_{\theta_{m-1}}^{(m-1)}(\tau_{m-1})\|^2_2,
\]
and, for the initial condition on the first interval we consider
\[
\mathcal{L}_{init}^{(1)}(\theta_1) := \|\mathbf{u}_{\theta_1}^{(1)}(t_0) - \mathbf{u}_0\|^2_{2}.
\]
Thus, the soft-constrained sequential loss is
\[
    \mathcal{L}_{SCS}(\Theta) = \sum_{m=1}^M \omega_{eq}^{(m)} \mathcal{L}_{eq}^{(m)}(\theta_m) + \omega_{init} \mathcal{L}_{init}^{(1)}(\theta_1) + \sum_{m=2}^{M} \omega_{init}^{(m)} \mathcal{L}_{init}^{(m)}(\theta_m).
\]
In order to implement the SCS-PINNs methodology, we consider Algorithm \ref{Alg:SCSPINNs} established below; the terms in blue indicate the choices that will distinguish SCS-PINNs from HCS-PINNs, which are described in the next subsection.
% ============================================================
% Algorithm: SCS-PINNs
% ============================================================

\begin{breakablealgorithm}\label{Alg:SCSPINNs}
\caption{Using SCS-PINNs to approximate the solution of an ODE system}
\footnotesize
\begin{algorithmic}[1]

\algline{\textbf{Step 1: Define the dynamical system}}
\alglinei{\textbf{INPUT:}  Right-hand side function $\mathbf{F}$, initial condition $\mathbf{u}_0\in\mathbb{R}^d$, interval $[t_0,T]$, partition size $M$, number of collocation points and weights $\omega_{\mathrm{eq}}^{(m)}$, $\omega_{\mathrm{init}}$ and $\omega_{\mathrm{init}}^{(m)}$.}
\alglinei{Consider $\mathbf{u}'(t)=\mathbf{F}(t,\mathbf{u}(t))$, $t\in[t_0,T]$, with $\mathbf{u}(t_0)=\mathbf{u}_0$.}
\alglinei{Define $\mathcal{R}[\mathbf{v}](t):=\mathbf{v}'(t)-\mathbf{F}(t,\mathbf{v}(t))$ and $\mathcal{I}[\mathbf{v}]:=\mathbf{v}(t_0)-\mathbf{u}_0$.}

\algline{\textbf{Step 2: Decompose the time interval}}
\alglinei{Take a partition $t_0=\tau_0<\tau_1<\cdots<\tau_M=T$.}
\alglinei{For $m=1,\ldots,M$, set $I_m$ as \eqref{eq:Im}, and choose the collocation points $\mathcal{T}_r^{(m)}:=\{t_{r,m}^{i}\}_{i=1}^{N_r^{(m)}}\subset I_m$.}

\algline{\textbf{Step 3: Define the local approximations}}
\alglinei{For each $m=1,\ldots,M$, define a neural network $\mathcal{N}_{\theta_m}^{(m)}:I_m\to\mathbb{R}^d$.}
\alglinei{On each subinterval $I_m$, define {\color{blue}$\mathbf{u}_{\theta_m}^{(m)}(t):=\mathcal{N}_{\theta_m}^{(m)}(t)$}.}
\alglinei{The initial condition and the connection conditions are included in the loss function.}

\algline{\textbf{Step 4: Define the loss function}}
\alglinei{For the first neural network $\mathcal{N}_{\theta_1}^{(1)}$, define the loss}
\alglinei{{\color{blue}$\mathcal{L}_{SCS}^{(1)}(\theta_1):=\dfrac{\omega_{eq}^{(1)}}{N_r^{(1)}}\sum_{i=1}^{N_r^{(1)}}\left\|\mathcal{R}[\mathbf{u}_{\theta_1}^{(1)}](t_{r,1}^{i})\right\|_2^2+\omega_{\mathrm{init}}\left\|\mathbf{u}_{\theta_1}^{(1)}(t_0)-\mathbf{u}_0\right\|_2^2$.}}
\alglinei{For each $\mathcal{N}_{\theta_m}^{(m)}$ with $m=2,\ldots,M$, define the loss}
\alglinei{{\color{blue}$\mathcal{L}_{SCS}^{(m)}(\theta_m):=\dfrac{\omega_{eq}^{(m)}}{N_r^{(m)}}\sum_{i=1}^{N_r^{(m)}}\left\|\mathcal{R}[\mathbf{u}_{\theta_m}^{(m)}](t_{r,m}^{i})\right\|_2^2+\omega_{\mathrm{init}}^{(m)}\left\|\mathbf{u}_{\theta_m}^{(m)}(\tau_{m-1})-\mathbf{u}_{\theta_{m-1}^{*}}^{(m-1)}(\tau_{m-1})\right\|_2^2$.}}

\algline{\textbf{Step 5: Train the SCS-PINN model}}
\alglinei{Compute $\theta_1^{*}\approx \operatorname*{argmin}_{\theta_1}\mathcal{L}_{SCS}^{(1)}(\theta_1)$ using automatic differentiation and a gradient-based optimizer.}
\alglinei{For $m=2,\ldots,M$, compute $\theta_m^{*}\approx \operatorname*{argmin}_{\theta_m}\mathcal{L}_{SCS}^{(m)}(\theta_m)$ using the previously trained value $\mathbf{u}_{\theta_{m-1}^{*}}^{(m-1)}(\tau_{m-1})$.} 

\algline{\textbf{Step 6: Construct the global approximation}}
\alglinei{Define $\mathbf{u}_{\Theta^*}^{\mathrm{SCS}}(t):=\mathbf{u}_{\theta_m^{*}}^{(m)}(t)$ for $t\in I_m$, where $\Theta^*:=\{\theta_1^*,\ldots,\theta_M^*\}$ where $\Theta^*$ is the full set of trainable parameters.}
\alglinei{\textbf{OUTPUT:} Piecewise approximation $\mathbf{u}_{\Theta^*}^{\mathrm{SCS}}(t)\approx\mathbf{u}(t)$.}

\end{algorithmic}
\end{breakablealgorithm}

\subsubsection{Hard-Constrained Sequential PINNs (HCS-PINNs)}
The hard-constrained formulation follows the same sequential idea, but the connection with the previous interval is built into the trial function. This avoids adding a separate continuity penalty in the loss, as in the hard-constrained sequential PINN strategy of \cite{Roy}. A standard hard-constrained choice is
\[
\mathbf{u}_\theta (t) := \mathbf{u}_0 + \phi(t) \mathcal{N}_\theta (t),
\]
where $\phi : [t_0, T] \to \mathbb{R}$ is a smooth scalar function satisfying $\phi(t_0)=0$. For instance, one may take $\phi(t)=t-t_0$. Then,
\[
\mathbf{u}_\theta(t_0) = \mathbf{u}_0
\]
holds identically for every $\theta$. In this case, the initial condition is no longer part of the loss functional and the corresponding loss reduces to
\[
\mathcal{L}^{hard}(\theta) = \omega_{eq}\mathcal{L}_{eq}(\theta).
\]
Thus, for the sequential decomposition, the continuity between connection points is set directly into the local functions. On the first interval $[t_0, \tau_1]$, we define
\[
\mathbf{u}_{\theta_1}^{(1)}(t) = \mathbf{u}_0 + \phi_1(t) \mathcal{N}_{\theta_1}^{(1)}(t), \quad t\in I_1,
\]
and for $m \geq 2$, we define recursively
\[
\mathbf{u}_{\theta_m}^{(m)} (t)= \mathbf{u}_{\theta_{m-1}}^{(m-1)}(\tau_{m-1}) + \phi_m (t) \mathcal{N}_{\theta_m}^{(m)}(t), \qquad t \in I_m,
\]
where $\phi_m: I_m \to \mathbb{R}$ satisfies $\phi_m(\tau_{m-1}) = 0$. The corresponding sequential hard-constrained loss is given by 
\[
\mathcal{L}_{HCS}(\Theta) = \sum_{m=1}^M \omega_{eq}^{(m)}\mathcal{L}_{eq}^{(m)}(\theta_m).
\]
Therefore, the continuity of the solution and the initial condition are imposed directly. As implementation, we can use Algorithm \ref{Alg:HCSPINNs}; the terms in blue indicate the choices that distinguish HCS-PINNs from SCS-PINNs.

% ============================================================
% Algorithm: HCS-PINNs
% ============================================================

\begin{breakablealgorithm}\label{Alg:HCSPINNs}
\caption{Using HCS-PINNs to approximate the solution of an ODE system}
\footnotesize
\begin{algorithmic}[1]

\algline{\textbf{Step 1: Define the dynamical system}}
\alglinei{\textbf{INPUT:} Right-hand side function $\mathbf{F}$, initial condition $\mathbf{u}_0\in\mathbb{R}^d$, interval $[t_0,T]$, partition size $M$, number of collocation points and weights $\omega_{\mathrm{eq}}^{(m)}$.}
\alglinei{Consider $\mathbf{u}'(t)=\mathbf{F}(t,\mathbf{u}(t))$, $t\in[t_0,T]$, with $\mathbf{u}(t_0)=\mathbf{u}_0$.}
\alglinei{Define $\mathcal{R}[\mathbf{v}](t):=\mathbf{v}'(t)-\mathbf{F}(t,\mathbf{v}(t))$ and $\mathcal{I}[\mathbf{v}]:=\mathbf{v}(t_0)-\mathbf{u}_0$.}

\algline{\textbf{Step 2: Decompose the time interval}}
\alglinei{Take a partition $t_0=\tau_0<\tau_1<\cdots<\tau_M=T$.}
\alglinei{For $m=1,\ldots,M$, set $I_m$ as \eqref{eq:Im} and choose the collocation points $\mathcal{T}_r^{(m)}:=\{t_{r,m}^{i}\}_{i=1}^{N_r^{(m)}}\subset I_m$.}

\algline{\textbf{Step 3: Define the local approximations}}
\alglinei{For each $m=1,\ldots,M$, define a neural network $\mathcal{N}_{\theta_m}^{(m)}:I_m\to\mathbb{R}^d$.}
\alglinei{Take $\phi_m(t):=t-\tau_{m-1}$ on $I_m$, so that $\phi_m(\tau_{m-1})=0$.}
\alglinei{On $I_1$, define {\color{blue}$\mathbf{u}_{\theta_1}^{(1)}(t):=\mathbf{u}_0+\phi_1(t)\mathcal{N}_{\theta_1}^{(1)}(t)$}.}
\alglinei{For $m=2,\ldots,M$, on $I_m$, define {\color{blue}$\mathbf{u}_{\theta_m}^{(m)}(t):=\mathbf{u}_{\theta_{m-1}^{*}}^{(m-1)}(\tau_{m-1})+\phi_m(t)\mathcal{N}_{\theta_m}^{(m)}(t)$}.}
\algline{\textbf{Step 4: Define the loss function}}
\alglinei{For each $\mathcal{N}_{\theta_m}^{(m)}$ ($m = 1,\dots,M)$, define the loss {\color{blue}$\mathcal{L}_{HCS}^{(m)}(\theta_m):=\dfrac{\omega_{eq}^{(m)}}{N_r^{(m)}}\sum_{i=1}^{N_r^{(m)}}\left\|\mathcal{R}[\mathbf{u}_{\theta_m}^{(m)}](t_{r,m}^{i})\right\|_2^2$.}}

\algline{\textbf{Step 5: Train the HCS-PINN model}}
\alglinei{For each $m=1,\dots,M$, compute $\theta_m^{*}\approx\operatorname*{argmin}_{\theta_m}\mathcal{L}_{HCS}^{(m)}(\theta_m)$ using automatic differentiation and a gradient-based optimizer.}
\alglinei{After training on $I_m$, set $\mathbf{u}_{\theta_m^{*}}^{(m)}(\tau_m)$ as the initial value for the next subinterval.}

\algline{\textbf{Step 6: Construct the global approximation}}
\alglinei{Define $\mathbf{u}_{\Theta^*}^{\mathrm{HCS}}(t):=\mathbf{u}_{\theta_m^{*}}^{(m)}(t)$ for $t\in I_m$, where $\Theta^*:=\{\theta_1^*,\ldots,\theta_M^*\}$ where $\Theta^*$ is the full set of trainable parameters.}
\alglinei{\textbf{OUTPUT:} Piecewise continuous approximation $\mathbf{u}_{\Theta^*}^{\mathrm{HCS}}(t)\approx\mathbf{u}(t)$ satisfying the initial condition exactly.}

\end{algorithmic}
\end{breakablealgorithm}

\subsection{A generalization error estimate for Sequential PINNs approximations}
In this section, we present an estimate of the error associated with the sequential PINN approximations introduced above. The generalization error of the sequential approximation is measured by the norm
\[
\mathcal{E}_G(\Theta^*) := \left(\sum_{m=1}^M \int_{I_m}\|\mathbf{u}(t)-\mathbf{u}_{\theta^*_m}^{(m)}(t)\|^2_2dt\right)^{1/2},
\]
where $I_m$ denotes the $m$-th subinterval of the time partition introduced in Section \ref{sec:sequential}, and $\Theta^*$ is the full set of trainable parameters.
For each subinterval, we define the local residual by
\[
\mathbf{R}_m(t):=\partial_t\mathbf{u}_{\theta_m^*}^{(m)}(t)
-\mathbf{F}\left(t,\mathbf{u}_{\theta_m^*}^{(m)}(t)\right),
\qquad t\in I_m.
\]

\begin{theo}\label{teo:cotaContinua}
    Let $\mathbf{u}$ be the solution of the general ODE system \eqref{eq:EDO_abs} and let $\mathbf{u}_{\Theta^*}$ be the sequential approximation defined in Section \ref{sec:sequential}. Suppose that there exists a constant $L>0$ such that
    \[
    \|\mathbf{F}(t,\mathbf{v})-\mathbf{F}(t,\mathbf{w})\|_2
    \leq L\|\mathbf{v}-\mathbf{w}\|_2
    \]
    for every $t\in[t_0,T]$ and for every $\mathbf{v},\mathbf{w}$ in the range of the exact solution and the sequential approximation. Then,
    \[
    \begin{split}
    \mathcal{E}_G(\Theta^*) \leq \sqrt{T-t_0}&e^{L(T-t_0)}\Bigg[\|\mathbf{u}_0-\mathbf{u}^{(1)}_{\theta^*_1}(t_0)\|_2 + \sum_{j=1}^{M-1} \|\mathbf{u}_{\theta^*_{j+1}}^{(j+1)}(\tau_j) - \mathbf{u}_{\theta^*_j}^{(j)}(\tau_j)\|_2 \\ & +  \sqrt{T-t_0}\left(\sum_{m=1}^M \int_{I_m}\|\mathbf{R}_m(t)\|^2_2dt\right)^{1/2}\Bigg].
    \end{split}
    \]
    In particular, for HCS-PINNs, we obtain
\[
\mathcal{E}_G(\Theta^*)
\leq
(T-t_0)
e^{L(T-t_0)}
\left(
\sum_{m=1}^{M}
\int_{I_m}
\|\mathbf{R}_m(t)\|_2^2dt
\right)^{1/2}.
\]
\end{theo}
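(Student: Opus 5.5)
We will prove the estimate with a Grönwall argument applied to the error on each subinterval, chaining the subintervals through the interface jumps. Set $\mathbf{e}_m(t):=\mathbf{u}(t)-\mathbf{u}^{(m)}_{\theta^*_m}(t)$ for $t\in I_m$. Subtracting the equation for $\mathbf{u}$ from the definition of $\mathbf{R}_m$ gives $\mathbf{e}_m'(t)=\mathbf{F}(t,\mathbf{u}(t))-\mathbf{F}(t,\mathbf{u}^{(m)}_{\theta^*_m}(t))-\mathbf{R}_m(t)$. Integrating from $\tau_{m-1}$ to $t\in I_m$ and using the Lipschitz hypothesis yields
\[
\|\mathbf{e}_m(t)\|_2\leq \|\mathbf{e}_m(\tau_{m-1})\|_2+\int_{\tau_{m-1}}^t\|\mathbf{R}_m(s)\|_2ds+L\int_{\tau_{m-1}}^t\|\mathbf{e}_m(s)\|_2ds .
\]
Grönwall's inequality then gives $\|\mathbf{e}_m(t)\|_2\leq e^{L(t-\tau_{m-1})}\big(\|\mathbf{e}_m(\tau_{m-1})\|_2+\int_{I_m}\|\mathbf{R}_m\|_2\big)$ for $t\in I_m$.

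Next we link consecutive subintervals. At an interface, $\mathbf{e}_{m+1}(\tau_m)=\mathbf{e}_m(\tau_m)+\big(\mathbf{u}^{(m)}_{\theta^*_m}(\tau_m)-\mathbf{u}^{(m+1)}_{\theta^*_{m+1}}(\tau_m)\big)$, so $\|\mathbf{e}_{m+1}(\tau_m)\|_2\leq\|\mathbf{e}_m(\tau_m)\|_2+\delta_m$, where $\delta_m$ denotes the jump norm. Define $\delta_0:=\|\mathbf{u}_0-\mathbf{u}^{(1)}_{\theta^*_1}(t_0)\|_2$. An induction on $m$, using that the exponents $e^{L(\tau_j-\tau_{j-1})}$ multiply to at most $e^{L(t-t_0)}\le e^{L(T-t_0)}$, gives for every $t\in I_m$
\[
\|\mathbf{e}_m(t)\|_2\leq e^{L(T-t_0)}\Big[\sum_{j=0}^{m-1}\delta_j+\sum_{k=1}^{m}\int_{I_k}\|\mathbf{R}_k\|_2\Big].
\]
Enlarging the sums to $j\le M-1$ and $k\le M$ makes the bracket a constant $B$ independent of $m$ and $t$. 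Cauchy–Schwarz then gives
\[
\sum_k\int_{I_k}\|\mathbf{R}_k\|_2\leq\sqrt{T-t_0}\Big(\sum_k\int_{I_k}\|\mathbf{R}_k\|_2^2\Big)^{1/2}.
\]

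Finally, since the pointwise bound is uniform, $\mathcal{E}_G(\Theta^*)^2\leq\sum_m\int_{I_m}e^{2L(T-t_0)}B^2\,dt=(T-t_0)e^{2L(T-t_0)}B^2$. Taking square roots gives the stated inequality. For HCS-PINNs the trial functions are built so that $\mathbf{u}^{(1)}_{\theta_1^*}(t_0)=\mathbf{u}_0$ and $\mathbf{u}^{(m+1)}_{\theta_{m+1}^*}(\tau_m)=\mathbf{u}^{(m)}_{\theta_m^*}(\tau_m)$, because $\phi_m(\tau_{m-1})=0$. Hence all $\delta_j$ vanish and the special case follows at once.

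The main technical point is the bookkeeping of exponential factors through the induction, where we must check that products of per-interval Grönwall factors never exceed $e^{L(T-t_0)}$. A second point is that the Lipschitz bound is only assumed on the ranges of the exact and approximate solutions. This is harmless, since we apply it only to pairs $(\mathbf{u}(s),\mathbf{u}^{(m)}_{\theta^*_m}(s))$. We also implicitly need each local trial function to be $C^1$ on $I_m$ so that the fundamental theorem of calculus applies; this holds for smooth activations.
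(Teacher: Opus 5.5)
Your proposal is correct and follows essentially the same route as the paper: a per-subinterval Gr\"onwall bound, the triangle inequality at each interface to chain the errors so that the exponential factors combine to $e^{L(t-t_0)}\leq e^{L(T-t_0)}$, Cauchy--Schwarz on the summed residuals, and integration of the resulting uniform pointwise bound, with the jump terms vanishing in the HCS case. The only blemish is an index slip in that last step: the relevant fact is $\phi_{m+1}(\tau_m)=0$, not $\phi_m(\tau_{m-1})=0$.
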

{
\begin{proof}
    For $t \in I_m$, we define the local error as $\mathbf{e}_m(t) := \mathbf{u}(t) - \mathbf{u}_{\theta^*_m}^{(m)}(t).$ In the subinterval $I_1$, the initial error satisfies
    \begin{equation}\label{eq:ErrorInicialt0}
    \|\mathbf{e}_1(t_0)\|_2 = \|\mathbf{u}_0 - \mathbf{u}_{\theta^*_{1}}^{(1)}(t_0)\|_2.
    \end{equation}
    Furthermore, at each connection point $\tau_j$, the error satisfies
    \begin{equation}\label{eq:EstimativoErrorInicialesB}
    \begin{split}
    \|\mathbf{e}_{j+1}(\tau_{j})\|_2 & = \|\mathbf{u}(\tau_j) - \mathbf{u}_{\theta^*_{j+1}}^{(j+1)}(\tau_j)\|_2 \\
    & \leq \|\mathbf{u}(\tau_j)-\mathbf{u}^{(j)}_{\theta^*_j}(\tau_j)\|_2 \hspace{0.1cm}+\hspace{0.1cm} \|\mathbf{u}_{\theta^*_j}^{(j)}(\tau_j) - \mathbf{u}_{\theta^*_{j+1}}^{(j+1)}(\tau_j)\|_2 \\
    & = \|\mathbf{e}_j(\tau_j)\|_2 \hspace{0.1cm}+\hspace{0.1cm} \|\mathbf{u}_{\theta^*_j}^{(j)}(\tau_j) - \mathbf{u}_{\theta^*_{j+1}}^{(j+1)}(\tau_j)\|_2.
    \end{split}
    \end{equation}
    On the other hand, deriving $\mathbf{e}_m(t)$ for $t\in I_m$ we obtain
    \[
    \mathbf{e}'_m(t) = \mathbf{F}(t,\mathbf{u}(t))-\mathbf{F}\left(t,\mathbf{u}_{\theta^*_m}^{(m)}(t)\right) - \mathbf{R}_m(t).
    \]
    Integrating on $[\tau_{m-1}, t] \subset I_m$, we get
    \[
        \mathbf{e}_m(t) = \mathbf{e}_m(\tau_{m-1}) + \int_{\tau_{m-1}}^t \left[\mathbf{F}(s,\mathbf{u}(s))- \mathbf{F}\left(s,\mathbf{u}_{\theta^*_{m}}^{(m)}(s)\right)\right] ds - \int_{\tau_{m-1}}^t \mathbf{R}_m(s)ds.
    \]
    Taking the euclidean norm, applying the triangle inequality, and using the Lipschitz condition above, we deduce that
    \[
    \|\mathbf{e}_m(t)\|_2\leq \|\mathbf{e}_m(\tau_{m-1})\|_2 + L\int_{\tau_{m-1}}^t \|\mathbf{e}_m(s)\|_2ds + \int_{\tau_{m-1}}^t \|\mathbf{R}_m(s)\|_2ds.
    \]
    Applying the Gronwall inequality, we obtain
    \begin{equation}\label{eq:EstimativoLocalB}
        \|\mathbf{e}_m(t)\|_2 \leq e^{L(t-\tau_{m-1})}\|\mathbf{e}_m(\tau_{m-1})\|_2 + \int_{\tau_{m-1}}^t e^{L(t-s)}\|\mathbf{R}_m(s)\|_2ds.
    \end{equation}
    
\noindent
For $m=1$, with $t\in I_1=[t_0,\tau_1]$, from \eqref{eq:ErrorInicialt0}, \eqref{eq:EstimativoLocalB} and using that $t_0 \leq s$, we have
\begin{equation}\label{eq:m1B}
\|\mathbf{e}_1(t)\|_2\hspace{0.1cm}
\leq\hspace{0.1cm}
e^{L(t-t_0)}
\left(
\|\mathbf{u}_0-\mathbf{u}_{\theta_1^*}^{(1)}(t_0)\|_2
+
\int_{t_0}^t
\|\mathbf{R}_1(s)\|_2ds
\right).
\end{equation}
Now, for $m=2$, we first estimate the error at the point $\tau_1$. Using the estimate \eqref{eq:EstimativoErrorInicialesB} with $j=1$ and the estimate \eqref{eq:m1B} at $t=\tau_1$, we obtain
\[
\begin{aligned}
\|\mathbf{e}_2(\tau_1)\|_2
\hspace{0.1cm}\leq & \hspace{0.1cm}
\|\mathbf{e}_1(\tau_1)\|_2
\hspace{0.1cm}+\hspace{0.1cm}
\|\mathbf{u}_{\theta_1^*}^{(1)}(\tau_1)
-
\mathbf{u}_{\theta_2^*}^{(2)}(\tau_1)\|_2 \\
\leq &\hspace{0.1cm} e^{L(\tau_1-t_0)}
\Bigg(
\|\mathbf{u}_0-\mathbf{u}_{\theta_1^*}^{(1)}(t_0)\|_2
\hspace{0.1cm}+\hspace{0.1cm}
\|\mathbf{u}_{\theta_1^*}^{(1)}(\tau_1)
-
\mathbf{u}_{\theta_2^*}^{(2)}(\tau_1)\|_2 \hspace{0.1cm} +\hspace{0.1cm}
\int_{t_0}^{\tau_1}
\|\mathbf{R}_1(s)\|_2ds
\Bigg).
\end{aligned}
\]
Therefore, using \eqref{eq:EstimativoLocalB} with $m=2$, noting that $t_0 \leq s$, for $t\in I_2=[\tau_1,\tau_2]$ we get 
\begin{equation}\label{eq:m2B}
\begin{aligned}
\|\mathbf{e}_2(t)\|_2
\hspace{0.1cm}\leq\hspace{0.1cm}
e^{L(t-t_0)}
\Bigg(
&
\|\mathbf{u}_0-\mathbf{u}_{\theta_1^*}^{(1)}(t_0)\|_2
\hspace{0.1cm}+\hspace{0.1cm}
\|\mathbf{u}_{\theta_1^*}^{(1)}(\tau_1)
-
\mathbf{u}_{\theta_2^*}^{(2)}(\tau_1)\|_2 +
\int_{t_0}^{\tau_1}
\|\mathbf{R}_1(s)\|_2ds
+
\int_{\tau_1}^{t}
\|\mathbf{R}_2(s)\|_2ds
\Bigg).
\end{aligned}
\end{equation}
For $m=3$, we proceed similarly. At the point $\tau_2$, from \eqref{eq:EstimativoErrorInicialesB} with $j=2$ and the estimate \eqref{eq:m2B} with $t = \tau_2$, we get
\[
\begin{aligned}
\|\mathbf{e}_3(\tau_2)\|_2
\hspace{0.1cm}\leq\hspace{0.1cm}
 e^{L(\tau_2-t_0)}
\Bigg(
&
\|\mathbf{u}_0-\mathbf{u}_{\theta_1^*}^{(1)}(t_0)\|_2
\hspace{0.1cm}+\hspace{0.1cm}
\|\mathbf{u}_{\theta_1^*}^{(1)}(\tau_1)
-
\mathbf{u}_{\theta_2^*}^{(2)}(\tau_1)\|_2 \hspace{0.1cm}+\hspace{0.1cm}
\|\mathbf{u}_{\theta_2^*}^{(2)}(\tau_2)
-
\mathbf{u}_{\theta_3^*}^{(3)}(\tau_2)\|_2  \\
&+
\int_{t_0}^{\tau_1}
\|\mathbf{R}_1(s)\|_2ds
+
\int_{\tau_1}^{\tau_2}
\|\mathbf{R}_2(s)\|_2ds
\Bigg).
\end{aligned}
\]
Consequently, applying \eqref{eq:EstimativoLocalB} with $m=3$, for $t\in I_3=[\tau_2,\tau_3]$ we obtain
\[
\begin{aligned}
\|\mathbf{e}_3(t)\|_2
\leq
e^{L(t-t_0)}
\Bigg(
&
\|\mathbf{u}_0-\mathbf{u}_{\theta_1^*}^{(1)}(t_0)\|_2
+
\|\mathbf{u}_{\theta_1^*}^{(1)}(\tau_1)
-
\mathbf{u}_{\theta_2^*}^{(2)}(\tau_1)\|_2  +
\|\mathbf{u}_{\theta_2^*}^{(2)}(\tau_2)
-
\mathbf{u}_{\theta_3^*}^{(3)}(\tau_2)\|_2  \\
&+
\int_{t_0}^{\tau_1}
\|\mathbf{R}_1(s)\|_2ds
+
\int_{\tau_1}^{\tau_2}
\|\mathbf{R}_2(s)\|_2ds
+
\int_{\tau_2}^{t}
\|\mathbf{R}_3(s)\|_2ds
\Bigg).
\end{aligned}
\]
Noting by finite induction, for every $t\in I_m$ it holds
\[
\begin{aligned}
\|\mathbf{e}_m(t)\|_2
\leq
e^{L(t-t_0)}
\Bigg(
&
\|\mathbf{u}_0-\mathbf{u}_{\theta_1^*}^{(1)}(t_0)\|_2 +
\sum_{j=1}^{m-1}
\|\mathbf{u}_{\theta_j^*}^{(j)}(\tau_j)
-
\mathbf{u}_{\theta_{j+1}^*}^{(j+1)}(\tau_j)\|_2  \\
&+
\sum_{i=1}^{m-1}
\int_{\tau_{i-1}}^{\tau_i}
\|\mathbf{R}_i(s)\|_2ds
+
\int_{\tau_{m-1}}^t
\|\mathbf{R}_m(s)\|_2ds
\Bigg).
\end{aligned}
\]
In particular, since that $m\leq M$ and $t\leq \tau_m\leq T$,
\[
\begin{aligned}
\|\mathbf{e}_m(t)\|_2
\leq
e^{L(T-t_0)}
\Bigg(
&
\|\mathbf{u}_0-\mathbf{u}_{\theta_1^*}^{(1)}(t_0)\|_2 +
\sum_{j=1}^{M-1}
\|\mathbf{u}_{\theta_j^*}^{(j)}(\tau_j)
-
\mathbf{u}_{\theta_{j+1}^*}^{(j+1)}(\tau_j)\|_2  +
\sum_{m=1}^{M}
\int_{I_m}
\|\mathbf{R}_m(s)\|_2ds
\Bigg),
\end{aligned}
\]
and thus, using the Cauchy-Schwarz inequality, we obtain
\[
\begin{aligned}
\|\mathbf{e}_m(t)\|_2
\leq
e^{L(T-t_0)}
\Bigg(
&
\|\mathbf{u}_0-\mathbf{u}_{\theta_1^*}^{(1)}(t_0)\|_2 +
\sum_{j=1}^{M-1}
\|\mathbf{u}_{\theta_j^*}^{(j)}(\tau_j)
-
\mathbf{u}_{\theta_{j+1}^*}^{(j+1)}(\tau_j)\|_2  \\ &+ \sqrt{T-t_0}\left(
\sum_{m=1}^{M}
\int_{I_m}
\|\mathbf{R}_m(s)\|^2_2ds\right)^{1/2}
\Bigg).
\end{aligned}
\]
Squaring, integrating over $I_m$ and summing from $m=1$ to $M$, we get
\[
\begin{aligned}
\sum_{m=1}^M
\int_{I_m}
\|\mathbf{e}_m(t)\|_2^2dt \leq 
(T-t_0)e^{2L(T-t_0)}
\Bigg(&
\|\mathbf{u}_0-\mathbf{u}_{\theta_1^*}^{(1)}(t_0)\|_2 
+
\sum_{j=1}^{M-1}
\|\mathbf{u}_{\theta_j^*}^{(j)}(\tau_j)
-
\mathbf{u}_{\theta_{j+1}^*}^{(j+1)}(\tau_j)\|_2
 \\
& +
\sqrt{T-t_0}\left(\sum_{m=1}^{M}
\int_{I_m}
\|\mathbf{R}_m(s)\|_2^2ds\right)^{1/2}
\Bigg)^2.
\end{aligned}
\]
Consequently,
\begin{equation}\label{eq:generalDem}
\begin{split}
\mathcal{E}_G(\Theta^*) = \left(\sum_{m=1}^M\int_{I_m}\|\mathbf{e}_m(t)\|^2_2dt\right)&^{1/2} \leq \sqrt{T-t_0}e^{L(T-t_0)}\Bigg[\|\mathbf{u}_0-\mathbf{u}^{(1)}_{\theta^*_1}(t_0)\|_2 \\ &+ \sum_{j=1}^{M-1} \|\mathbf{u}_{\theta^*_{j+1}}^{(j+1)}(\tau_j) - \mathbf{u}_{\theta^*_j}^{(j)}(\tau_j)\|_2 + \sqrt{T-t_0}\left(\sum_{m=1}^M \int_{I_m}\|\mathbf{R}_m(t)\|^2_2dt\right)^{1/2}\Bigg].
\end{split}
\end{equation}
For HCS-PINNs, the initial condition and the connection conditions between subintervals are imposed exactly. Hence,
\[
\mathbf{u}_{\theta_1^*}^{(1)}(t_0)=\mathbf{u}_0, \qquad \text{and}\qquad 
\mathbf{u}_{\theta_{j+1}^*}^{(j+1)}(\tau_j)
=
\mathbf{u}_{\theta_j^*}^{(j)}(\tau_j),
\qquad j=1,\ldots,M-1.
\]
Therefore, from \eqref{eq:generalDem} we directly obtain
\[
\mathcal{E}_G(\Theta^*)
\leq
(T-t_0)e^{L(T-t_0)}
\left(
\sum_{m=1}^{M}
\int_{I_m}
\|\mathbf{R}_m(t)\|_2^2dt
\right)^{1/2}.
\]
\end{proof}
}
Since the training error depends on the collocation points, we introduce the quadrature formulation for the numerical approximation of the integral of a function $g(t)$ on $I_m$ as
\[
\left|\int_{I_m} g(t)dt - \sum_{j=1}^{N_r^{(m)}}q_{j,m} g(t_{r,m}^j) \right|\leq \varepsilon^{quad}_m,
\]
for $1\leq m\leq M$, where $t_{r,m}^j \in I_m$ are collocation points, $q_{j,m} > 0$ are quadrature weights, and $\varepsilon_m^{quad}$ is the quadrature error. Therefore, we derive the following corollary.
\begin{coro}
Assume that the hypotheses of Theorem \ref{teo:cotaContinua} are true. Let $\mathcal{T}_{r}^{(m)} = \{t_{r,m}^j\}_{j=1}^{N_r^{(m)}} \subset I_m$ be the residual collocation points used in the subinterval $I_m$. Then, the generalization error satisfies
    \[
    \begin{split}
    \mathcal{E}_G(\Theta^*) \leq \sqrt{T-t_0}&e^{L(T-t_0)}\Bigg[\|\mathbf{u}_0-\mathbf{u}^{(1)}_{\theta^*_1}(t_0)\|_2 + \sum_{j=1}^{M-1} \|\mathbf{u}_{\theta^*_{j+1}}^{(j+1)}(\tau_j) - \mathbf{u}_{\theta^*_j}^{(j)}(\tau_j)\|_2 \\ & +  \sqrt{T-t_0}\left(\sum_{m=1}^M \sum_{j=1}^{N_r^{(m)}}q_{j,m}\|\mathbf{R}_m(t_{r,m}^j)\|^2_2 + \sum_{m=1}^M\varepsilon_m^{quad}\right)^{1/2}\Bigg].
    \end{split}
    \]
    In particular, for HCS-PINNs, we obtain
\[
\mathcal{E}_G(\Theta^*)
\leq
(T-t_0)
e^{L(T-t_0)}
\left(
\sum_{m=1}^M\sum_{j=1}^{N_r^{(m)}}q_{j,m}\|\mathbf{R}_m(t_{r,m}^j)\|^2_2 + \sum_{m=1}^M\varepsilon_m^{quad}
\right)^{1/2}.
\]
\end{coro}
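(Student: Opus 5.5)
The corollary follows by inserting the quadrature estimate into the residual term of Theorem \ref{teo:cotaContinua}. The only quantity in the bound of that theorem that is not already evaluated at finitely many points is
\[
\sum_{m=1}^M \int_{I_m}\|\mathbf{R}_m(t)\|_2^2\,dt .
\]
The plan is therefore to bound each integral by its quadrature sum plus the quadrature error. We then substitute the result into the theorem, using that $x\mapsto\sqrt{x}$ is monotone on $[0,\infty)$.

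First, for each $m$ apply the quadrature inequality with $g(t)=\|\mathbf{R}_m(t)\|_2^2$. This requires $g$ to be integrable on $I_m$ and defined pointwise at the collocation points. That holds because $\mathbf{u}^{(m)}_{\theta_m^*}$ is a smooth network (or ansatz) function, and $\mathbf{F}$ is continuous in the setting of \eqref{eq:EDO_abs}. Dropping the absolute value on one side gives
\[
\int_{I_m}\|\mathbf{R}_m(t)\|_2^2\,dt \le \sum_{j=1}^{N_r^{(m)}} q_{j,m}\|\mathbf{R}_m(t_{r,m}^j)\|_2^2 + \varepsilon_m^{quad}.
\]

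Second, sum over $m=1,\dots,M$. Both sides are nonnegative, so taking square roots preserves the inequality. We then insert this into the bracket of Theorem \ref{teo:cotaContinua}, where the residual term carries the factor $\sqrt{T-t_0}$ and the whole bracket is multiplied by $\sqrt{T-t_0}\,e^{L(T-t_0)}$. The other terms in the bracket (the initial mismatch and the interface jumps) are left untouched. This gives the first claimed estimate.

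For HCS-PINNs, the ansatz gives $\mathbf{u}^{(1)}_{\theta_1^*}(t_0)=\mathbf{u}_0$ and continuity at every $\tau_j$, so the first two terms in the bracket vanish. Alternatively, one can start directly from the HCS estimate of Theorem \ref{teo:cotaContinua} and perform the same substitution, which yields the factor $(T-t_0)e^{L(T-t_0)}$.

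The only point requiring care is the direction of the quadrature inequality. The statement controls an absolute difference, and we need only the upper bound $\int g \le \sum_j q_{j,m} g(t_{r,m}^j)+\varepsilon_m^{quad}$. We must also check that the right-hand side remains nonnegative so that the square root is meaningful. This is automatic, since it dominates $\int_{I_m} g\ge 0$. No other obstacle is expected.
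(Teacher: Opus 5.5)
Your proposal is correct and is exactly the argument the paper intends. The paper gives no separate proof and states the corollary as an immediate consequence of Theorem~\ref{teo:cotaContinua} once each $\int_{I_m}\|\mathbf{R}_m\|_2^2\,dt$ is bounded by its quadrature sum plus $\varepsilon_m^{quad}$, and your handling of the one-sided quadrature bound, the monotonicity of the square root, and the HCS case (vanishing mismatch terms and $\sqrt{T-t_0}\cdot\sqrt{T-t_0}=T-t_0$) is all in order.
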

\section{Numerical simulations}

\subsection{Optimal Control Algorithm using PINNs}
To approximate numerically the optimal control problem, we use a gradient descent algorithm with Armijo line search. In each iteration, the state and adjoint systems are approximated separately by PINNs, including SCS-PINNs and HCS-PINNs. The incorporation of Armijo line search is motivated by the need to reduce the impact of the reduced gradient approximation error used in updating the controls, an error caused by the approximation error of the state-adjoint system using PINNs. Thus, this search criterion helps to obtain a more regular evolution of the objective functional, causing that the stopping criterion based on its stabilization be more effective. Thus, we propose the following iterative scheme:

\begin{breakablealgorithm}
\caption{Optimal control algorithm using separated state--adjoint PINN approximations}
\label{alg:CO}
\footnotesize
\begin{algorithmic}[1]

\algline{\textbf{Input:} Initial controls $[\ControlC^0,\ControlS^0]\in\mathcal{U}_{ad}$, reduction parameter $\beta\in(0,1)$, Armijo parameter $\eta\in(0,1)$, tolerance \texttt{tol}, positive integer $\mathcal{N}$, maximum number of iterations \texttt{maxIter}, and maximum number of Armijo trials \texttt{maxArmijo}.}

\algline{\textbf{Initialization:} Set $k=0$, choose $[\ControlC^0,\ControlS^0]\in\mathcal{U}_{ad}$.}
\algline{\textbf{Step 1: Compute of the state--adjoint approximation}}
\alglinei{{\textbf{Step 1.1: }}Given the current controls $[\ControlC^k,\ControlS^k]\in\mathcal{U}_{ad}$, compute the state approximation}
\alglineii{$S_{\theta^k}^k(t)=\bigl(u_{\theta^k}^k(t),\sigma_{\theta^k}^k(t)\bigr)$ by using (SCS or HCS)-PINN associated with the controlled state system \eqref{eq:SistemaControlado}.}

    \alglinei{\textbf{Step 1.2: } Using $[\ControlC^k,\ControlS^k]$ and $S_{\theta^k}^k$, compute the adjoint approximation $A_{\phi^k}^k(t)=\bigl(p_{1,\phi^k}^k(t),p_{2,\phi^k}^k(t)\bigr)$ by using (SCS or HCS)-PINN associated with the adjoint system \eqref{eq:adjoint_system_ode}.}
\algline{\textbf{Step 2: Evaluate of the current cost}}

\alglinei{Compute $J_k=
\widehat{J}\bigl([\ControlC^k,\ControlS^k]\bigr)$ using the state variables obtained from \textbf{Step 1}.}

\algline{\textbf{Step 3. Stop criterion}}
\alglinei{If $k > \mathcal{N}$, define $\delta_k = |J_k-J_{k-1}|.$
Given a tolerance $\texttt{tol} > 0$ and an integer $\mathcal{N} \in \mathbb{N}$, the algorithm stops if \[\delta_{k-j} < \texttt{tol},\qquad j =0,...,\mathcal{N}-1,\] 
that is, if the value of the functional remains stable during $\mathcal{N}$ consecutive iterations.}
\algline{\textbf{Step 4: Compute of the reduced gradient}}
\alglinei{Compute the reduced gradients components $d_{\ControlC}^k(t)=k_3(t)-\kappa u^k_{\theta^k}(t)\sigma^k_{\theta^k}(t)p_{1,\phi^k}^k(t),$ and\\ $d_{\ControlS}^k(t)=k_4(t)-S_c u_{\theta^k}^k(t)p_{2,\phi^k}^k(t).$ Then, define the reduced gradient as $[d_{\ControlC}^k(t), d_{\ControlS}^k(t)]^{\mathsf{T}}=\nabla \widehat{J}([\ControlC^k, \ControlS^k]).$}

\algline{\textbf{Step 5: Compute gradient descent + Armijo line search with projection in the admissible controls set}}

\alglinei{Compute the control for the next iteration as: $[\ControlC^{k+1}, \ControlS^{k+1}]^{\mathsf{T}} = Proj_{\mathcal{U}_{ad}}([\ControlC^k, \ControlS^k]^{\mathsf{T}} - \gamma([\ControlC^k, \ControlS^k])\nabla \widehat{J}([\ControlC^k, \ControlS^k])),$}
\alglinei{where $\gamma([\ControlC^k, \ControlS^k])$ is the Armijo step size, defined by an approximation line minimization in the following way (see \cite{Polak}): Given constants $\eta, \beta \in (0,1)$ independent of $[\ControlC^k, \ControlS^k]$, define the integer $j([\ControlC^k, \ControlS^k])$ by}
\algline{\[j([\ControlC^k, \ControlS^k]) := \min\Big\{j=0,1,...,\texttt{maxArmijo} : \widehat{J}\Big(Proj_{\mathcal{U}_{ad}}([\ControlC^k, \ControlS^k]^{\mathsf{T}}-\beta^j\nabla \widehat{J}([\ControlC^k, \ControlS^k]))\Big) -\widehat{J}\Big([\ControlC^k, \ControlS^k]\Big) \leq -\eta \beta ^j \|\nabla \widehat{J}([\ControlC^k, \ControlS^k])\|^2\Big\},\]}
\alglinei{where each evaluation of $\widehat{J}$ in this line search requires a new (SCS or HCS)-PINN approximation of the state system associated with the candidate controls, and define $\gamma([\ControlC^k, \ControlS^k])=\beta^{j([\ControlC^k, \ControlS^k])}$. Additionally,  
following the Algorithm \ref{alg:Projection} below, compute the projection on the admissible controls set, denoted by $Proj_{\mathcal{U}_{ad}}$.}

\vspace{0.03cm}
\algline{\textbf{Step 6: Advance the next iteration}}
\alglinei{If $k = \texttt{maxIter}$, then stop, else set $k=k+1$ and repeat this procedure.}
\end{algorithmic}
\end{breakablealgorithm}
\vspace{0.3cm}
For the step of projection onto the admissible controls set $\mathcal{U}_{ad}$ that appears in the Algorithm \ref{alg:CO}, we use the Algorithm \ref{alg:Projection}. This projection is done separately for each control. For the antiangiogenic therapy $\ControlS$, the projection is obtained by the truncation of the point over the interval $[0,1]$. For the cytotoxic therapy $\ControlC$, the integral restriction is handled using Proposition \ref{prop:proyeccion}.

\begin{breakablealgorithm}
\caption{Projection of controls onto the admissible controls set}
\label{alg:Projection}
\small
\begin{algorithmic}[1]

\algline{\textbf{Input:} Controls $\widetilde{\ControlC}(t)$, $\widetilde{\ControlS}(t)$, integral bound $c_{\max}>0$, final time $T$ and bisection tolerance \texttt{tolBis}.}

\algline{\textbf{Step 1. Project the antiangiogenic control $\widetilde{\ControlS}$.}}
\alglinei{Project $\widetilde{\ControlS}$ onto the pointwise admissible interval $[0,1]$ by $\ControlS(t) = \min\left\{\max\left\{\widetilde{\ControlS}(t),0 \right\},1 \right\}.$}

\algline{\textbf{Step 2. Project the chemotherapy control $\widetilde\ControlC$.}}
\alglinei{\textbf{Step 2.1. The $\gamma$ function}}
\alglineii{First, define:}
\alglinei{\[\phi(\gamma)=\int_0^T\min\{1,\max\{0,\widetilde{\ControlC}(t)-\gamma \}\}\,dt-c_{\max}.\]}
\alglinei{\textbf{Step 2.2. Set the bisection interval}}
\alglineii{If $\phi(0) \leq 0$, then stopping the algorithm and define $\ControlC(t) = \min\left\{\max\left\{\widetilde{\ControlC}(t) \right\},1 \right\}$. Else, define $\gamma_{low}$ as $0$ and search $\gamma_{high}$ such that $\phi(\gamma_{high}) < 0$. One way to find $\gamma_{high}$ is by taking $\gamma_{high}=1$ and doubling it until $\phi(\gamma_{high}) < 0.$}
\alglinei{\textbf{Step 2.3. Apply the bisection algorithm}}

\alglineii{While $|\gamma_{high}-\gamma_{low}|> \texttt{tolBis}$, define $\gamma_m=\frac{\gamma_{low}+\gamma_{high}}{2}$; if $\phi(\gamma_{low})\phi(\gamma_m)\leq0$, take $\gamma_{high}=\gamma_m$; if not, take $\gamma_{low}=\gamma_m$. At the end, consider $\gamma_m$.}

\alglinei{\textbf{Step 2.4. Apply the $\gamma_m$}}
\alglineii{Define $\ControlC(t) = \min\left\{\max\left\{\widetilde{\ControlC}(t)-\gamma_m,0 \right\},1 \right\}$.}

\algline{\textbf{Output:} Projected controls $[\ControlC,\ControlS]\in\mathcal{U}_{ad}$.}
\\
\end{algorithmic}
\end{breakablealgorithm}

\begin{remark}
In the implementation, when a step size is accepted in the Armijo search, the state solution and the value of the reduced functional are already available. We keep these values and use them in the next iteration, instead of solving the same state problem again with PINNs. This makes the code faster by avoiding a training step that was already done, but it does not change the projected gradient method.
\end{remark}

\subsection{Computational setup}
In all numerical experiments, the final time is fixed as $T=2$. The initial condition is set as
\[ u(0)=0.6, \qquad \sigma(0)=1.0, \]
and the desired values in the objective functional $J$ defined in (\ref{funcionalObjetivo}) are $\sigma_d(t) = 1.0$ for $t \in [0,T]$ and $\sigma_{dT} = 1.0$ in $t=T$. Likewise, we use $c_{\max} = 0.75$ as the constrained total amount for the cytotoxic therapy. The parameters for the ODE are on the Table \ref{tab:Parametros}, and the weights for the objective functional, $k_i$ and $l_j$ for $i=1,\dots,4$ and $j=1,2$, are on the Table \ref{tab:pesosFuncional}. In particular, the weights $l_1,l_2$ are taken greater than the corresponding ones $k_1,k_2,k_3,k_4,$ considering that is important to expect a minimum of the tumor density at the final time $T$. For the descent gradient algorithm with Armijo line search, we use the parameters given in the Table \ref{tab:ParametrosArmijo}.
\begin{table}[H]
\centering

\begin{minipage}{0.51\textwidth}
\centering
\begin{tabular}{cc|cc}
\hline
\multicolumn{2}{c|}{\textbf{Tumor equation}} & \multicolumn{2}{c}{\textbf{Oxygen equation}} \\ \hline
\textbf{Parameter}      & \textbf{Value}     & \textbf{Parameter}      & \textbf{Value}     \\ \hline
$\alpha$                & $4$                & $A_{ox}$                & $0.15$              \\
$\hat \rho$            & $3.5$              & $k_{ox}$                & $0.1$              \\
$b$                     & $1$                & $\beta$                & $1$                \\
$\kappa$                  & $2$               & $P_{er}S_v$                   & $0.1$                \\
                &                 & $S_c$                   & $0.4$              \\ \hline
\end{tabular}
\caption{Parameters for the state and adjoint systems.}
\label{tab:Parametros}
\end{minipage}
\hspace{0.05\textwidth}
\begin{minipage}{0.39\textwidth}
\centering
\begin{tabular}{c|c}
\hline
\textbf{Weight} & \textbf{Value} \\ \hline
$k_1$ & $1$ \\
$k_2$ & $1$ \\
$k_3$ & $1$ \\
$k_4$ & $1$ \\
$l_1$ & $10$ \\
$l_2$ & $10$ \\ \hline
\end{tabular}
\caption{Weights in the cost functional.}
\label{tab:pesosFuncional}
\end{minipage}
\end{table}
\begin{table}[H]
\centering
\par\vspace{0.5cm}
\begin{minipage}{0.5\textwidth}
\centering
\begin{tabular}{c|c}
\hline
\textbf{Parameter} & \textbf{Value}\\ \hline
Maximum control iterations & $800$ \\
Initial step size & $1$ \\
Minimum step size & $10^{-4}$ \\
Step reduction factor & $0.5$ \\
Maximum reductions & $2$ \\
Armijo constant & $10^{-3}$ \\
Stopping tolerance & $10^{-6}$ \\ 
Stabilized iterations ($\mathcal{N}$) & 3 \\ \hline
\end{tabular}
\caption{Gradient-descent and Armijo line-search parameters used in the control update.}
\label{tab:ParametrosArmijo}
\end{minipage}
\end{table}
To validate the effectiveness of (SCS or HCS)-PINNs for approximating the optimal control problem, we compare the results with those obtained via a classical solver.  In the classical approach, we use a temporal discretization with \(n_{\mathrm{Iter}}=250\) subintervals. This uniform time mesh is used in all the computations, namely, to represent the discrete controls, to evaluate the integrals of the cost functional by the trapezoidal rule, and to compare the PINN and the classical approximations. We use the fixed hyperparameters in all simulations given in the Table \ref{tab:ParametrosRedesFijos}. For the implementation we use Python, in particular the \texttt{PyTorch} framework for the construction, automatic differentiation, and training of PINNs, and \texttt{SciPy} for the classical reference solution, where the state and adjoint systems are integrated with the implicit Radau method with tolerances $\mathrm{rtol}=10^{-8}$ and $\mathrm{atol}=10^{-12}$. All computations are performed in double precision with a fixed random seed. In each subinterval, the collocation points are taken equispaced and, after training, we retain the parameters attaining the lowest loss along the epochs. We also point out that, at every iteration $k$ of Algorithm \ref{alg:CO}, the networks associated with the state and the adjoint systems are initialized again and trained from scratch with the whole number of epochs, thus, no information of the previous iteration is transferred to the next one. This explains the computational times reported below. To ensure experimental comparability, all simulations were run under the same hardware and software conditions on a single computer: a MacBook Air M4 (2025) with 8 GB of RAM.

\begin{table}[H]
\centering
\begin{tabular}{c|c}
\hline
\textbf{Hyperparameter} & \textbf{Fixed value} \\ \hline
Collocation points per subinterval (\(N^{(m)}_r\)) & \(250\) \\
Optimizer & Adam \\
Learning rate & \(10^{-3}\) \\
Activation function & tanh \\
SCS penalty weight $\omega_{init}^{(m)}$ (when is applicable) & \(5.0\) \\
Total epochs & $40\,000$ \\ \hline
\end{tabular}
\caption{Fixed PINN hyperparameters in all simulations.}
\label{tab:ParametrosRedesFijos}
\end{table}

\subsection{Experiment 1: Separated PINNs for states and adjoint }
In this subsection, we implement the Algorithm \ref{alg:CO} using separated state-adjoint PINNs. The PINN hyperparameters used in this experiment are given in Table \ref{tab:pinn_seq_vs_nonseq}.
\begin{table}[H]
\centering
\begin{minipage}{0.48\textwidth}
\centering
\textbf{Sequential PINN}
\vspace{1mm}

\begin{tabular}{c|c|c}
\hline
\textbf{Hyperparameter} & \textbf{States} & \textbf{Adjoint} \\ \hline
Hidden layers                        & 4      & 5      \\
Neurons per hidden layer             & 100    & 150    \\
Subintervals                         & 4      & 4      \\
\shortstack[c]{Collocation points\\per subinterval} & 250 & 250 \\
Epochs per subinterval               & 10\,000  & 10\,000  \\ \hline
\end{tabular}
\end{minipage}
\hfill
\begin{minipage}{0.48\textwidth}
\centering
\textbf{Non-sequential PINN}
\vspace{1mm}

\begin{tabular}{c|c|c}
\hline
\textbf{Hyperparameter} & \textbf{States} & \textbf{Adjoint} \\ \hline
Hidden layers                        & 4      & 5      \\
Neurons per hidden layer             & 100    & 150    \\
Subintervals                         & 1      & 1      \\
\shortstack[c]{Collocation points\\per subinterval} & 250 & 250 \\
Epochs per subinterval               & 40\,000  & 40\,000  \\ \hline
\end{tabular}
\end{minipage}
\caption{Hyperparameter comparison between sequential and non-sequential PINN configurations.}
\label{tab:pinn_seq_vs_nonseq}
\end{table}

\begin{remark}
The selected values ensure a fair computational comparison, both settings use the same cost training per network. In particular, the total number of optimizer updates is
\[
\mphrase{\text{optimizer}\\\text{updates}}
=
\bigl(\mphrase{\text{subintervals}}\bigr)
\mathbin{\times}
\bigg(\mphrase{\text{epochs per}\\\text{subinterval}}\bigg),
\]
settings satisfy $4\times 10{,}000 = 1\times 40{,}000.$ Likewise, the total number of residual evaluations is
\[
\mphrase{\text{residual}\\\text{evaluations}}
=
\bigl(\mphrase{\text{subintervals}}\bigr)
\mathbin{\times}
\bigg(\mphrase{\text{epochs per}\\\text{subinterval}}\bigg)
\mathbin{\times}
\bigg(\mphrase{\text{collocation points}\\\text{per subinterval}}\bigg).
\]
Then, $4\times 10\,000\times 250 = 1\times 40\,000\times 250$. Therefore, differences in the results are attributable to the time decomposition strategy rather than to an imbalance in the cost training per network. However, keeping the same number of collocation points per subinterval equalizes the training cost but not the density of collocation points; in the sequential setting, $N_r^{(m)}=250$ points are placed in each subinterval of length $T/4$, so the local density is four times larger than in the non-sequential setting. Hence, in the comparison below, the effect of the time decomposition cannot be completely separated from this refinement of the residual sampling.
\end{remark}
For this experiment, we evaluate four configurations obtained by combining two PINN formulations (SCS and HCS) with two temporal decompositions (4 subintervals and 1 subinterval) for both the state and adjoint systems. The SCS-PINN configuration with 1 subinterval corresponds to the original PINN formulation.\\

Table \ref{tab:results_time} shows, for each configuration, the approximation of the objective functional $J$, the stopping iteration of the PINN algorithm, and the total computation time. The reference value of $J$ obtained with the classical method is included in the last row. In general, the HCS configurations give values of $J$ closer to the classical reference, especially when 4 subintervals are used, while the SCS configurations have the shortest execution time.
\begin{table}[H]
\centering
\begin{tabular}{l|c|c|c}
\hline
\textbf{Configuration}     & \textbf{J Approximation} & \textbf{Iter Stop}                      & \textbf{Time} \\ \hline
SCS (4 subintervals)       & $15.878714$              & $24$                                         & $8\,102$ s  \\
SCS (1 subinterval)        & $15.623673$              & $56$                                         & $19\,529$ s    \\
HCS (4 subintervals)       & $15.450080$              & $79$                                        & $24\,773$ s \\
HCS (1 subinterval)        & $15.561244$              & $60$                                         & $20\,426$ s \\ \hline
\textbf{Classic reference} & $15.350898$              &   $118$                                           &               \\ \cline{1-4}
\end{tabular}
\caption{Performance comparison across SCS and HCS PINN configurations under sequential (4 subintervals) and non-sequential (1 subinterval) settings.}
\label{tab:results_time}
\end{table}

In order to quantify the comparison of the approximations, Table \ref{tab:results_L2} reports the errors of the optimal controls and the associated optimal states, measured in the $L^2(0,T)$-norm with respect to the classical reference, which is the same for the four configurations. These values are used in the discussion of the figures below.
\begin{table}[H]
\centering
\begin{tabular}{l|c|c|c|c}
\hline
\textbf{Configuration} & $\|\ControlC-\ControlC_{\mathrm{ref}}\|_{L^2}$ & $\|\ControlS-\ControlS_{\mathrm{ref}}\|_{L^2}$ & $\|u-u_{\mathrm{ref}}\|_{L^2}$ & $\|\sigma-\sigma_{\mathrm{ref}}\|_{L^2}$ \\ \hline
SCS (4 subintervals) & $0.1027$ & $0.3802$ & $0.1389$ & $0.0317$ \\
SCS (1 subinterval)  & $0.0242$ & $0.2523$ & $0.0103$ & $0.0245$ \\
HCS (4 subintervals) & $0.0255$ & $0.1135$ & $0.0138$ & $0.0112$ \\
HCS (1 subinterval)  & $0.0266$ & $0.1947$ & $0.0128$ & $0.0186$ \\ \hline
\end{tabular}

\caption{Errors of the optimal controls and optimal states in the $L^2(0,T)$-norm, with respect to the classical reference.}
\label{tab:results_L2}
\end{table}

Figure \ref{fig:sc_panel_global} shows the optimal controls obtained with Soft-Constrained PINNs, using the sequential and non-sequential methods. Both methods capture the main shape of the reference controls and identify the regions where the therapies are active or inactive. However, for the cytotoxic control $\ControlC$ (Figures \ref{fig:sc_control_c} and \ref{fig:sc_error_c}), the largest errors appear near the abrupt changes of the control. For the antiangiogenic control $\ControlS$ (Figures \ref{fig:sc_control_s} and \ref{fig:sc_error_s}), the non-sequential method gives a better approximation of the final activation region. In fact, in the Soft-Constrained case the non-sequential scheme is more accurate than the sequential one for both controls and both states (see Table \ref{tab:results_L2}). For the sequential approximation, the error is negligible in the first two subintervals and appears from the third one on, that is, once the cytotoxic control becomes active.

\input{graficas/control_SC_PINNs_exp1}

Figure \ref{fig:sc_states_panel} illustrates the optimal states obtained using the controls from Figures \ref{fig:sc_control_c} and \ref{fig:sc_control_s}. These states were also computed by means of an SC-PINN approximation. The graphics show that the PINN methods give a good approximation of the optimal states using the computed optimal controls, in comparison with the reference optimal states. In particular, the errors observed in Figures \ref{fig:sc_states_error_u} and \ref{fig:sc_states_error_sigma} are consistent with the errors reported in the approximation of the optimal controls. The same behavior described for the controls is observed in the states, in particular, the non-sequential scheme is more accurate, and the difference is particularly large for the tumor cell density $u$ (see Table \ref{tab:results_L2}).\\
\input{graficas/estados_SC_PINNs_exp1}

Figure \ref{fig:sc_functional_panel} shows the evolution of the cost functional for the approximations corresponding to SC-PINNs. It can be observed that in both cases, it presents a decreasing trend and subsequently stabilizes, approaching to the reference value, showing good performance of the Gradient Descent algorithm with Armijo line search.\\
\input{graficas/funcional_SC_PINNs_exp1}

Figure \ref{fig:hc_panel_global} shows the graphs corresponding to the optimal controls obtained with Hard-Constrained PINNs (sequential and non-sequential). In these graphs, an improvement in both controls can be observed compared with the Soft-Constrained case. The errors of the controls with respect to the classical reference are more localized and have smaller magnitude. In this case, a significant improvement is observed for the sequential case compared with the non-sequential case. In this case, the improvement of the sequential scheme compared to the non-sequential scheme is not uniform in both controls; in particular, it is substantial for the antiangiogenic control \ControlS, whose $L^2$-error is reduced by approximately $40\%$, and is concentrated in the final activation region for the cytotoxic control \ControlC, while the overall $L^2$-error of $\ControlC$ is practically the same in both schemes (see Table \ref{tab:results_L2}). \\

\input{graficas/control_HC_PINNs_exp1}

    The graphs corresponding to the states for the Hard-Constrained method (see Figure \ref{fig:hc_states_panel}) confirm this improvement. The approximated curves of the states $u$ and $\sigma$ (see Figures \ref{fig:hc_states_u} and \ref{fig:hc_states_sigma}) show a better approximation to the reference, and the errors (see Figures \ref{fig:hc_states_error_u} and \ref{fig:hc_states_error_sigma}) remain small during most of the interval, especially for the sequential scheme. Here, the two schemes behave differently for each state: the sequential scheme reduces the error in oxygen concentration $\sigma$ by approximately $40\%$, while for tumor cell density $u$, both schemes achieve comparable accuracy, with a slight advantage for the non-sequential scheme (see Table \ref{tab:results_L2}). This agrees with the behavior of the controls, since $\sigma$ is the state directly affected by the antiangiogenic control $\ControlS$.\\

\input{graficas/estados_HC_PINNs_exp1}

Finally, the graphs of the cost functional in the Hard-Constrained case (see Figure \ref{fig:hc_functional_panel}) show that both schemes (sequential and non-sequential) stay very close to the classical reference. Therefore, these results indicate that for this control problem and algorithm used, the Hard-Constrained formulation is more robust than the Soft-Constrained formulation in terms of accuracy of the functional, approximation of the states, and approximation of the optimal controls. In particular, for the Hard-Constrained case, it can be observed that the sequential scheme helps to obtain a better approximation of the objective functional, of the antiangiogenic control and of the oxygen concentration, while for the cytotoxic control and the tumor cell density both schemes are comparable, although with a higher computational cost. It is also worth noting that the time decomposition is beneficial only in the Hard-Constrained formulation; in the Soft-Constrained one, it degrades the approximation.

\input{graficas/funcional_HC_PINNs_exp1}

\subsection{Experiment 2: robustness with respect to initial controls}
\label{sec:experiment2_robustness}
This experiment tests the robustness with respect to the initial controls. For this, the test uses three different admissible initial controls and solves again the same problem, using the HCS-PINN formulation with 4 subintervals, which, according to the previous experiment, gives the best approximation. The idea is not to compare again the PINN formulations, but to observe if the final control keeps the same main behavior when the starting point is changed.\\

Figure \ref{fig:exp2_control_robustness_panel} contains the controls used in this test. Figure \ref{fig:exp2_initial_control_c} shows the three initial controls for the cytotoxic control $\ControlC$, and Figure \ref{fig:exp2_optimal_control_c} shows the optimal controls obtained after the minimization process. Figure \ref{fig:exp2_initial_control_s} gives the initial information for the antiangiogenic control $\ControlS$, and Figure \ref{fig:exp2_optimal_control_s} gives the optimal information for the same control. The initial controls are not close in the three cases. The first one is constant, the second one is concentrated in a part of the interval, and the last one has a more variable shape. Thus, if the method gives similar final controls, this is a good indication that the computation does not follow only the initial guess.\\

\input{graficas/exp2}

Figure \ref{fig:exp2_control_robustness_panel} shows that the optimal control has a similar structure in the three cases. For the cytotoxic therapy $\ControlC$, the control remains close to zero in the first part of the time interval and later becomes active near the final part. The change from the inactive to the active region is not exactly equal in the three computations, but the obtained control is close after this region.\\

For the antiangiogenic therapy $\ControlS$, the behavior is also stable. A small difference appears near the initial and final times, but the inactive zone in the middle of the interval appears in the three runs. Also, the final activation of $\ControlS$ is recovered for all the initial controls. Therefore, the role of the antiangiogenic therapy in the optimal strategy is preserved when the starting control is changed.\\

Table \ref{tab:exp2} shows, for each initial control, the approximation of the objective functional $J$, the stopping iteration of the PINN algorithm, and the corresponding values obtained with the classical solver. The final values of $J$ are close in the three cases, with a difference of less than $0.5\%$, which agrees with the similar structure observed in the optimal controls. It can be observed that the classical reference also changes slightly with the initial control. This is an expected behavior, given that the problem is not convex and the algorithm gives local optimal controls.
\begin{table}[H]
\centering
\small
\begin{tabular}{l|c|c|c|c}
\hline
\textbf{Initial control} & \textbf{$J$ (HCS, 4 subintervals)} & \textbf{Iter Stop} & \textbf{$J$ (classic)} & \textbf{Iter Stop (classic)} \\ \hline
1 (constant)                  & $15.450080$ & $79$  & $15.350898$ & $118$ \\
2 (concentrated at the start) & $15.407012$ & $153$ & $15.364730$ & $162$ \\
3 (sinusoidal)                & $15.385723$ & $98$  & $15.349983$ & $116$ \\ \hline
\end{tabular}
\caption{Objective functional and stopping iteration for the three initial controls.}
\label{tab:exp2}
\end{table}

\section{Conclusions}

This article proposes and analyzes an ODE model to describe glioblastoma dynamics, considering the interaction between tumor density and oxygen concentration. The model incorporates two therapeutic interventions: chemotherapy and antiangiogenic therapy, allowing the representation of the effect of both treatments on tumor progression. Based on this model, an optimal control problem was formulated, with the aim of reducing the tumor density, as well as regulate oxygen concentration around a desired value, and penalize the excessive use of therapies. The well-posedness of the state system was proved, including the positivity and boundedness of the solutions. With respect to the optimal control problem, the existence of global optimal solution was obtained. Furthermore, by using Pontryagin Minimum Principle, the adjoint system and the necessary conditions for optimality were derived.\\

For the numerical approximation, the Physics-Informed Neural Networks methodology was adapted to the state-adjoint system associated with the control problem. Specifically, time-sequential formulations were considered, both with soft and hard- constraints. In addition, an error estimate for these approximations was deduced. This approach is particularly relevant, since the quality of the state approximation directly influences the approximation of the adjoint variables and, therefore, the updating of the controls. Numerical simulations show that the proposed methodology accurately reproduces the qualitative behavior of optimal controls and state variables, compared to a classical solver. Specifically, the sequential formulation with strong constraints yielded the most accurate results. Furthermore, the formulation with soft constraints was less computationally expensive, although less accurate.
\\ 

{\bf Credit  authorship contribution statement}\\
J.J. Forero-Hern\'andez: Writing-review \& editing, Writing-original draft, Visualization, Validation, Software, Methodology, Investigation, Formal analysis, Conceptualization. E.J. Villamizar-Roa: Writing-review \& editing, Writing-original draft, Visualization,
Validation, Methodology, Investigation, Formal analysis, Conceptualization.\\

{\bf Declaration of competing interest.}\\
The authors declare that they have no known competing financial interests or personal relationships that could have appeared
to influence the work reported in this paper.\\

{\bf Data availability.}\\
The code will be available in a public repository once the internal review and release process is completed.\\

{\bf Acknowledgments.}\\
The first author was supported by the Vicerrector\'ia Acad\'emica of the Universidad Industrial de Santander. The second author was supported by the Vicerrector\'ia de Investigaci\'on y Extenci\'on of the Universidad Industrial de Santander.  The authors would like to thank Professor Francisco Guill\'en-Gonz\'alez for providing useful comments on the manuscript.

%%%%%%%%%%%%%%%%%%%%%%%%%%%%%%%%%%%%%%%%%%%%%%%%%%%%%%%%%%%%%
%%%%%%%%%%%%%%%%%%%%%%%%%%%%%%%%%%%%%%%%%%%%%%%%%%%%%%%%%%%%%
% BIBLIOGRAFIA
%%%%%%%%%%%%%%%%%%%%%%%%%%%%%%%%%%%%%%%%%%%%%%%%%%%%%%%%%%%%%
%%%%%%%%%%%%%%%%%%%%%%%%%%%%%%%%%%%%%%%%%%%%%%%%%%%%%%%%%%%%%

\end{document}

%% file: graficas/funcional_SC_PINNs_exp1.tex
\input{graficas/estilos_graficas}
\GraphSetupStyles{\linewidth}{0.48\linewidth}{fourStyle}{redTone}{oneStyle}{greenTone}
\begin{figure}[H]
\centering
% ================================================================
% ====================== Gráfica A ================================
% ================================================================
\begin{subfigure}[t]{0.48\linewidth}
\centering
\begin{tikzpicture}[
  spy using outlines={rectangle, % forma del recuadro en la zona seleccionada
    lens={
      scale=1.8, % zoom
    },
    width=3.2cm,
    height=1.05cm, % Tamaño de la ventana
    spy connection path={
      \draw[zoomLink]
        (tikzspyonnode.north east) -- ([xshift=-2.9cm]tikzspyinnode.south east);
      \draw[zoomLink]
        (tikzspyonnode.south east) -- (tikzspyinnode.south east);
       \draw[zoomLink]
        (tikzspyonnode.north west) -- (tikzspyinnode.north west);
       \draw[zoomLink]
        (tikzspyonnode.south west) -- (tikzspyinnode.south west);
    },
    every spy on node/.append style={zoomSourceFrame},
    every spy in node/.append style={zoomWindow}
  }
]
\begin{axis}[
  axisStyle,
  legend style={legendHidden},
  unbounded coords=jump,
  xlabel={t},
  ylabel={J},
  xmin=-4.9500000001308999,
  xmax=120.9500000001309,
  ymin=14,
  ymax=41
]
  \addplot+[refStyle] coordinates { (1,40.303177316357882) (2,21.908500508333738) (3,17.979008710736093) (4,17.243110524807197) (5,17.051361535742284) (6,16.955519619210964) (7,16.892608302197516) (8,16.848603941389189) (9,16.813477982255829) (10,16.78347050827378) (11,16.756633071694097) (12,16.732724970608992) (13,16.710744697519196) (14,16.690047904493383) (15,16.670521881992894) (16,16.65191464062352) (17,16.63437464909055) (18,16.617610631472157) (19,16.600879743053493) (20,16.585179038302421) (21,16.569755071768245) (22,16.554945489036726) (23,16.540235214647691) (24,16.525884549477912) (25,16.512483344508126) (26,16.498177508404165) (27,16.485043203981782) (28,16.4718711583599) (29,16.459325914119553) (30,16.446442363853141) (31,16.433620202014421) (32,16.421544760185288) (33,16.410336067838479) (34,16.398166004215522) (35,16.385778727532017) (36,16.374159207506832) (37,16.363346266534037) (38,16.352524331698504) (39,16.341714365589773) (40,16.330466686441) (41,16.319771732513143) (42,16.310483355272439) (43,16.301723300195651) (44,16.293460705029631) (45,16.285659868085048) (46,16.278285873169523) (47,16.115138540337334) (48,16.015058480470522) (49,15.949935270320282) (50,15.902065112125507) (51,15.865647747186426) (52,15.838234121514356) (53,15.81544279214528) (54,15.796143267369297) (55,15.779418119076512) (56,15.765988933467664) (57,15.75417336881809) (58,15.743406091396992) (59,15.733597800183395) (60,15.724564470459027) (61,15.716312738335386) (62,15.708685784667185) (63,15.70245891942135) (64,15.696472004183679) (65,15.690765746662469) (66,15.685285087861859) (67,15.680095305430791) (68,15.67505956540904) (69,15.670423822646207) (70,15.665681626386926) (71,15.66134867706965) (72,15.657852299757842) (73,15.654278278379705) (74,15.650980489140675) (75,15.647878421426295) (76,15.644677107366139) (77,15.641424933463519) (78,15.63844540555904) (79,15.635627864376486) (80,15.632795358043726) (81,15.629822034811969) (82,15.556936053226114) (83,15.516587550875942) (84,15.489912440122042) (85,15.470518879295451) (86,15.455647479609592) (87,15.443770852819387) (88,15.433992163401289) (89,15.425775839594953) (90,15.418721643944242) (91,15.412573164752258) (92,15.407117956756847) (93,15.402429432247475) (94,15.398511834137077) (95,15.394852854236003) (96,15.391443362057331) (97,15.388277585502525) (98,15.385288409854645) (99,15.382504178825073) (100,15.379928853329492) (101,15.377389805730395) (102,15.375094096071072) (103,15.372788876946485) (104,15.370706533964666) (105,15.368607702183404) (106,15.366690055274454) (107,15.364852732665996) (108,15.363040211433855) (109,15.361380341642816) (110,15.359715841349233) (111,15.358088126508816) (112,15.35658317508179) (113,15.355108824433374) (114,15.353588060108882) (115,15.352181335184941) (116,15.350897882278407) (117,15.350897882278407) (118,15.350897882278407) (119,15.350897882278407) };
  \addplot+[fourStyle] coordinates { (1,40.303455044591225) (2,21.897225765599575) (3,18.141479250945171) (4,17.312427490133178) (5,17.099465009975464) (6,16.996236751656696) (7,16.622674081742385) (8,16.460261444854375) (9,16.37069767591732) (10,16.317066811578361) (11,16.28050177694363) (12,16.251690030967481) (13,16.230377095093218) (14,16.20984794709338) (15,16.18997860155044) (16,16.175322592565788) (17,16.158155453504445) (18,16.145112928484565) (19,16.130466090094618) (20,16.119424039764237) (21,16.10505712404721) (22,15.87871362265189) (23,15.87871362265189) (24,15.87871362265189) (25,15.87871362265189) (26,15.87871362265189) (27,15.87871362265189) (28,15.87871362265189) (29,15.87871362265189) (30,15.87871362265189) (31,15.87871362265189) (32,15.87871362265189) (33,15.87871362265189) (34,15.87871362265189) (35,15.87871362265189) (36,15.87871362265189) (37,15.87871362265189) (38,15.87871362265189) (39,15.87871362265189) (40,15.87871362265189) (41,15.87871362265189) (42,15.87871362265189) (43,15.87871362265189) (44,15.87871362265189) (45,15.87871362265189) (46,15.87871362265189) (47,15.87871362265189) (48,15.87871362265189) (49,15.87871362265189) (50,15.87871362265189) (51,15.87871362265189) (52,15.87871362265189) (53,15.87871362265189) (54,15.87871362265189) (55,15.87871362265189) (56,15.87871362265189) (57,15.87871362265189) (58,15.87871362265189) (59,15.87871362265189) (60,15.87871362265189) (61,15.87871362265189) (62,15.87871362265189) (63,15.87871362265189) (64,15.87871362265189) (65,15.87871362265189) (66,15.87871362265189) (67,15.87871362265189) (68,15.87871362265189) (69,15.87871362265189) (70,15.87871362265189) (71,15.87871362265189) (72,15.87871362265189) (73,15.87871362265189) (74,15.87871362265189) (75,15.87871362265189) (76,15.87871362265189) (77,15.87871362265189) (78,15.87871362265189) (79,15.87871362265189) (80,15.87871362265189) (81,15.87871362265189) (82,15.87871362265189) (83,15.87871362265189) (84,15.87871362265189) (85,15.87871362265189) (86,15.87871362265189) (87,15.87871362265189) (88,15.87871362265189) (89,15.87871362265189) (90,15.87871362265189) (91,15.87871362265189) (92,15.87871362265189) (93,15.87871362265189) (94,15.87871362265189) (95,15.87871362265189) (96,15.87871362265189) (97,15.87871362265189) (98,15.87871362265189) (99,15.87871362265189) (100,15.87871362265189) (101,15.87871362265189) (102,15.87871362265189) (103,15.87871362265189) (104,15.87871362265189) (105,15.87871362265189) (106,15.87871362265189) (107,15.87871362265189) (108,15.87871362265189) (109,15.87871362265189) (110,15.87871362265189) (111,15.87871362265189) (112,15.87871362265189) (113,15.87871362265189) (114,15.87871362265189) (115,15.87871362265189) (116,15.87871362265189) (117,15.87871362265189) (118,15.87871362265189) (119,15.87871362265189) };
  \addplot+[oneStyle] coordinates { (1,40.303370845386901) (2,21.873607208438099) (3,17.974097562998896) (4,17.248473887553125) (5,17.059060847468178) (6,16.958947634007824) (7,16.893025020854797) (8,16.856281442261086) (9,16.812274350152325) (10,16.783838513885343) (11,16.761239866842352) (12,16.733739771468301) (13,16.708734557489326) (14,16.66866837942267) (15,16.653037955632211) (16,16.420471649054818) (17,16.31103639998544) (18,16.235491683620992) (19,16.189917228771108) (20,16.141784210922609) (21,16.104342631334013) (22,16.07892102069118) (23,16.049024407202261) (24,16.03585041479047) (25,16.023162915806779) (26,16.002404817613559) (27,15.998481970889618) (28,15.980480116952608) (29,15.970749276964987) (30,15.878972907989501) (31,15.830194161005416) (32,15.793498746422648) (33,15.771256575956892) (34,15.755198031606671) (35,15.735528005607893) (36,15.726267779933293) (37,15.712145452676983) (38,15.705925594333694) (39,15.695958144818935) (40,15.692492079030581) (41,15.686707180082063) (42,15.676039710197927) (43,15.671170016123982) (44,15.665024652680039) (45,15.656117983722121) (46,15.656117983722121) (47,15.653558780633221) (48,15.653558780633221) (49,15.653558780633221) (50,15.64497528742163) (51,15.6423002325167) (52,15.638213949618368) (53,15.633398593573519) (54,15.62367272336185) (55,15.62367272336185) (56,15.62367272336185) (57,15.62367272336185) (58,15.62367272336185) (59,15.62367272336185) (60,15.62367272336185) (61,15.62367272336185) (62,15.62367272336185) (63,15.62367272336185) (64,15.62367272336185) (65,15.62367272336185) (66,15.62367272336185) (67,15.62367272336185) (68,15.62367272336185) (69,15.62367272336185) (70,15.62367272336185) (71,15.62367272336185) (72,15.62367272336185) (73,15.62367272336185) (74,15.62367272336185) (75,15.62367272336185) (76,15.62367272336185) (77,15.62367272336185) (78,15.62367272336185) (79,15.62367272336185) (80,15.62367272336185) (81,15.62367272336185) (82,15.62367272336185) (83,15.62367272336185) (84,15.62367272336185) (85,15.62367272336185) (86,15.62367272336185) (87,15.62367272336185) (88,15.62367272336185) (89,15.62367272336185) (90,15.62367272336185) (91,15.62367272336185) (92,15.62367272336185) (93,15.62367272336185) (94,15.62367272336185) (95,15.62367272336185) (96,15.62367272336185) (97,15.62367272336185) (98,15.62367272336185) (99,15.62367272336185) (100,15.62367272336185) (101,15.62367272336185) (102,15.62367272336185) (103,15.62367272336185) (104,15.62367272336185) (105,15.62367272336185) (106,15.62367272336185) (107,15.62367272336185) (108,15.62367272336185) (109,15.62367272336185) (110,15.62367272336185) (111,15.62367272336185) (112,15.62367272336185) (113,15.62367272336185) (114,15.62367272336185) (115,15.62367272336185) (116,15.62367272336185) (117,15.62367272336185) (118,15.62367272336185) (119,15.62367272336185) };
  
  \coordinate (zoomPointC) at (axis cs:33,17.7); % Punto del grafico a ser ampliado (x=t, y=valor de C).
  \coordinate (zoomViewC) at (axis cs:73,28); % Ubicación de la ventana del zoom
\end{axis}
\spy on (zoomPointC) in node at (zoomViewC);
\end{tikzpicture}
\caption{Objective functional $J$.}
\label{fig:sc_functional_j}
\end{subfigure}\hfill%
% ================================================================
% ====================== Gráfica C ================================
% ================================================================
\begin{subfigure}[t]{0.48\linewidth}
\centering
\begin{tikzpicture}
\begin{axis}[
  axisStyle,
  legend style={legendHidden},
  unbounded coords=jump,
  xlabel={t},
  ylabel={$|\Delta J|$},
  xmin=-4.9500000001308999,
  xmax=125.9500000001309,
  ymin=-0.039729476714892464,
  ymax=0.83798261107236238
]
  \addplot+[fourStyle] coordinates { (1,0.00027772823334260011) (2,0.011274742734162402) (3,0.16247054020907825) (4,0.069316965325981528) (5,0.048103474233180066) (6,0.040717132445731608) (7,0.26993422045513071) (8,0.38834249653481479) (9,0.44278030633850918) (10,0.46640369669541926) (11,0.47613129475046634) (12,0.48103493964151056) (13,0.48036760242597865) (14,0.48019995740000354) (15,0.48054328044245409) (16,0.47659204805773214) (17,0.47621919558610415) (18,0.47249770298759231) (19,0.47041365295887516) (20,0.46575499853818414) (21,0.46469794772103512) (22,0.67623186638483546) (23,0.66152159199580041) (24,0.64717092682602129) (25,0.63376972185623615) (26,0.61946388575227473) (27,0.60632958132989145) (28,0.59315753570800922) (29,0.5806122914676628) (30,0.56772874120125039) (31,0.55490657936253029) (32,0.54283113753339762) (33,0.53162244518658852) (34,0.5194523815636316) (35,0.50706510488012668) (36,0.4954455848549415) (37,0.48463264388214711) (38,0.47381070904661371) (39,0.46300074293788285) (40,0.45175306378910918) (41,0.44105810986125249) (42,0.4317697326205483) (43,0.42300967754376018) (44,0.41474708237774038) (45,0.40694624543315783) (46,0.39957225051763245) (47,0.2364249176854436) (48,0.13634485781863148) (49,0.071221647668391341) (50,0.023351489473617093) (51,0.013065875465464671) (52,0.040479501137534513) (53,0.063270830506610309) (54,0.082570355282593155) (55,0.099295503575378419) (56,0.11272468918422618) (57,0.12454025383379985) (58,0.13530753125489881) (59,0.14511582246849564) (60,0.15414915219286307) (61,0.16240088431650435) (62,0.17002783798470489) (63,0.1762547032305406) (64,0.18224161846821119) (65,0.18794787598942087) (66,0.19342853479003175) (67,0.19861831722109891) (68,0.20365405724285068) (69,0.20828980000568365) (70,0.21303199626496472) (71,0.21736494558223995) (72,0.22086132289404858) (73,0.22443534427218559) (74,0.22773313351121516) (75,0.2308352012255952) (76,0.23403651528575153) (77,0.23728868918837165) (78,0.24026821709285073) (79,0.24308575827540402) (80,0.24591826460816435) (81,0.24889158783992116) (82,0.32177756942577673) (83,0.36212607177594869) (84,0.38880118252984808) (85,0.40819474335643946) (86,0.42306614304229839) (87,0.43494276983250302) (88,0.44472145925060147) (89,0.45293778305693699) (90,0.45999197870764874) (91,0.46614045789963221) (92,0.47159566589504287) (93,0.4762841904044155) (94,0.48020178851481354) (95,0.48386076841588732) (96,0.4872702605945598) (97,0.49043603714936523) (98,0.49342521279724494) (99,0.49620944382681742) (100,0.49878476932239835) (101,0.50132381692149508) (102,0.50361952658081854) (103,0.50592474570540524) (104,0.50800708868722388) (105,0.51010592046848657) (106,0.51202356737743671) (107,0.51386088998589408) (108,0.51567341121803523) (109,0.5173332810090745) (110,0.51899778130265695) (111,0.52062549614307407) (112,0.52213044757010074) (113,0.52360479821851591) (114,0.52512556254300868) (115,0.52653228746694936) (116,0.52781574037348378) (117,0.52781574037348378) (118,0.52781574037348378) (119,0.52781574037348378) };
  \addplot+[oneStyle] coordinates { (1,0.00019352902901914604) (2,0.034893299895639274) (3,0.0049111477371965861) (4,0.0053633627459284128) (5,0.007699311725893665) (6,0.0034280147968601682) (7,0.00041671865728076796) (8,0.0076775008718961146) (9,0.0012036321035040487) (10,0.00036800561156269396) (11,0.0046067951482555713) (12,0.0010148008593091618) (13,0.0020101400298706551) (14,0.021379525070713612) (15,0.017483926360682744) (16,0.23144299156870218) (17,0.32333824910510955) (18,0.38211894785116485) (19,0.41096251428238517) (20,0.44339482737981228) (21,0.46541244043423191) (22,0.47602446834554613) (23,0.49121080744543022) (24,0.49003413468744128) (25,0.48932042870134751) (26,0.49577269079060571) (27,0.48656123309216426) (28,0.49139104140729195) (29,0.48857663715456567) (30,0.56746945586363928) (31,0.60342604100900488) (32,0.62804601376264024) (33,0.63907949188158675) (34,0.64296797260885086) (35,0.65025072192412381) (36,0.64789142757353879) (37,0.65120081385705397) (38,0.64659873736481011) (39,0.6457562207708385) (40,0.63797460741041867) (41,0.63306455243107962) (42,0.63444364507451212) (43,0.63055328407166833) (44,0.62843605234959199) (45,0.62954188436292746) (46,0.62216788944740209) (47,0.46157975970411336) (48,0.36149969983730124) (49,0.2963764896870611) (50,0.25708982470387731) (51,0.22334751466972591) (52,0.20002017189598753) (53,0.18204419857176113) (54,0.17247054400744766) (55,0.15574539571466239) (56,0.14231621010581463) (57,0.13050064545624096) (58,0.11973336803514201) (59,0.10992507682154518) (60,0.10089174709717774) (61,0.092640014973536466) (62,0.085013061305335924) (63,0.078786196059500213) (64,0.072799280821829626) (65,0.067093023300619947) (66,0.061612364500009065) (67,0.056422582068941907) (68,0.051386842047190129) (69,0.046751099284357167) (70,0.042008903025076094) (71,0.037675953707800858) (72,0.034179576395992228) (73,0.030605555017855224) (74,0.027307765778825654) (75,0.024205698064445613) (76,0.021004384004289278) (77,0.017752210101669164) (78,0.014772682197190079) (79,0.011955141014636794) (80,0.0091226346818764625) (81,0.0061493114501196544) (82,0.06673667013573592) (83,0.10708517248590788) (84,0.13376028323980726) (85,0.15315384406639865) (86,0.16802524375225758) (87,0.17990187054246221) (88,0.18968055996056066) (89,0.19789688376689618) (90,0.20495107941760793) (91,0.2110995586095914) (92,0.21655476660500206) (93,0.22124329111437468) (94,0.22516088922477273) (95,0.22881986912584651) (96,0.23222936130451899) (97,0.23539513785932442) (98,0.23838431350720413) (99,0.24116854453677661) (100,0.24374387003235753) (101,0.24628291763145427) (102,0.24857862729077773) (103,0.25088384641536443) (104,0.25296618939718307) (105,0.25506502117844576) (106,0.25698266808739589) (107,0.25881999069585326) (108,0.26063251192799441) (109,0.26229238171903368) (110,0.26395688201261613) (111,0.26558459685303326) (112,0.26708954828005993) (113,0.2685638989284751) (114,0.27008466325296787) (115,0.27149138817690854) (116,0.27277484108344296) (117,0.27277484108344296) (118,0.27277484108344296) (119,0.27277484108344296) };
\end{axis}

\end{tikzpicture}
\caption{Error of $J$.}
\label{fig:sc_functional_error_j}
\end{subfigure}

\vspace{0.25em}
\centering
% Leyenda compartida (aplica a los 4 paneles de la figura principal).
\begin{tikzpicture}
  \begin{axis}[
    hide axis,
    xmin=0, xmax=1, ymin=0, ymax=1,
    legend columns=3,
    legend style={
      legendBox,
      at={(0.5,0.5)},
      anchor=center,
      font=\small
    }
  ]
    \addlegendimage{refStyle}
    \addlegendentry{Classical reference}
    \addlegendimage{fourStyle}
    \addlegendentry{Sequential}
    \addlegendimage{oneStyle}
    \addlegendentry{Non-sequential}
  \end{axis}
\end{tikzpicture}
\vspace{0.18em}
\caption{Comparison of the objective functional $J$ and its error with Soft-Constrained PINNs. The classical method vs the sequential scheme with 4 subintervals vs the non-sequential scheme with 1 subinterval.}
\label{fig:sc_functional_panel}
\end{figure}

%% file: graficas/funcional_HC_PINNs_exp1.tex
\input{graficas/estilos_graficas}
\GraphSetupStyles{\linewidth}{0.48\linewidth}{fourStyle}{redTone}{oneStyle}{greenTone}
\begin{figure}[H]
\centering
% ================================================================
% ====================== Gráfica A ================================
% ================================================================
\begin{subfigure}[t]{0.48\linewidth}
\centering
\begin{tikzpicture}[
  spy using outlines={rectangle, % forma del recuadro en la zona seleccionada
    lens={
      scale=1.8, % zoom
    },
    width=3.2cm,
    height=1.05cm, % Tamaño de la ventana
    spy connection path={
      \draw[zoomLink]
        (tikzspyonnode.north east) -- ([xshift=-2.9cm]tikzspyinnode.south east);
      \draw[zoomLink]
        (tikzspyonnode.south east) -- (tikzspyinnode.south east);
       \draw[zoomLink]
        (tikzspyonnode.north west) -- (tikzspyinnode.north west);
       \draw[zoomLink]
        (tikzspyonnode.south west) -- (tikzspyinnode.south west);
    },
    every spy on node/.append style={zoomSourceFrame},
    every spy in node/.append style={zoomWindow}
  }
]
\begin{axis}[
  axisStyle,
  legend style={legendHidden},
  unbounded coords=jump,
  xlabel={t},
  ylabel={J},
  xmin=-4.9500000001308999,
  xmax=125.9500000001309,
  ymin=14,
  ymax=41.248544093728455
]
  \addplot+[refStyle] coordinates { (1,40.303177316357882) (2,21.908500508333738) (3,17.979008710736093) (4,17.243110524807197) (5,17.051361535742284) (6,16.955519619210964) (7,16.892608302197516) (8,16.848603941389189) (9,16.813477982255829) (10,16.78347050827378) (11,16.756633071694097) (12,16.732724970608992) (13,16.710744697519196) (14,16.690047904493383) (15,16.670521881992894) (16,16.65191464062352) (17,16.63437464909055) (18,16.617610631472157) (19,16.600879743053493) (20,16.585179038302421) (21,16.569755071768245) (22,16.554945489036726) (23,16.540235214647691) (24,16.525884549477912) (25,16.512483344508126) (26,16.498177508404165) (27,16.485043203981782) (28,16.4718711583599) (29,16.459325914119553) (30,16.446442363853141) (31,16.433620202014421) (32,16.421544760185288) (33,16.410336067838479) (34,16.398166004215522) (35,16.385778727532017) (36,16.374159207506832) (37,16.363346266534037) (38,16.352524331698504) (39,16.341714365589773) (40,16.330466686441) (41,16.319771732513143) (42,16.310483355272439) (43,16.301723300195651) (44,16.293460705029631) (45,16.285659868085048) (46,16.278285873169523) (47,16.115138540337334) (48,16.015058480470522) (49,15.949935270320282) (50,15.902065112125507) (51,15.865647747186426) (52,15.838234121514356) (53,15.81544279214528) (54,15.796143267369297) (55,15.779418119076512) (56,15.765988933467664) (57,15.75417336881809) (58,15.743406091396992) (59,15.733597800183395) (60,15.724564470459027) (61,15.716312738335386) (62,15.708685784667185) (63,15.70245891942135) (64,15.696472004183679) (65,15.690765746662469) (66,15.685285087861859) (67,15.680095305430791) (68,15.67505956540904) (69,15.670423822646207) (70,15.665681626386926) (71,15.66134867706965) (72,15.657852299757842) (73,15.654278278379705) (74,15.650980489140675) (75,15.647878421426295) (76,15.644677107366139) (77,15.641424933463519) (78,15.63844540555904) (79,15.635627864376486) (80,15.632795358043726) (81,15.629822034811969) (82,15.556936053226114) (83,15.516587550875942) (84,15.489912440122042) (85,15.470518879295451) (86,15.455647479609592) (87,15.443770852819387) (88,15.433992163401289) (89,15.425775839594953) (90,15.418721643944242) (91,15.412573164752258) (92,15.407117956756847) (93,15.402429432247475) (94,15.398511834137077) (95,15.394852854236003) (96,15.391443362057331) (97,15.388277585502525) (98,15.385288409854645) (99,15.382504178825073) (100,15.379928853329492) (101,15.377389805730395) (102,15.375094096071072) (103,15.372788876946485) (104,15.370706533964666) (105,15.368607702183404) (106,15.366690055274454) (107,15.364852732665996) (108,15.363040211433855) (109,15.361380341642816) (110,15.359715841349233) (111,15.358088126508816) (112,15.35658317508179) (113,15.355108824433374) (114,15.353588060108882) (115,15.352181335184941) (116,15.350897882278407) (117,15.350897882278407) (118,15.350897882278407) (119,15.350897882278407) };
  \addplot+[fourStyle] coordinates { (1,40.302785991948141) (2,21.900799496845668) (3,17.975065116217795) (4,17.237914972635782) (5,17.049018415163019) (6,16.951562641292451) (7,16.890585120046673) (8,16.846490126037523) (9,16.811293412817143) (10,16.782156102722926) (11,16.7540297929236) (12,16.731840453894318) (13,16.708579063516101) (14,16.688173905680806) (15,16.668027321420031) (16,16.649268708425236) (17,16.631691663512914) (18,16.615873859132538) (19,16.599317690202486) (20,16.583637534392729) (21,16.568037712367005) (22,16.552474108106473) (23,16.538174516083469) (24,16.52596162903734) (25,16.508310222940818) (26,16.49807175256538) (27,16.482167587252839) (28,16.470104534576333) (29,16.455604967132345) (30,16.443283076670269) (31,16.432057155391295) (32,16.253559922133284) (33,16.142720235132586) (34,16.072802882132365) (35,16.018594753717135) (36,15.981149555245651) (37,15.950460738228589) (38,15.927663393339245) (39,15.904563680182486) (40,15.885617362423519) (41,15.868325188887928) (42,15.853092964306285) (43,15.840558992881588) (44,15.828431679516358) (45,15.817007036216411) (46,15.803371812356312) (47,15.793925256850867) (48,15.786946641254284) (49,15.702840218571151) (50,15.655136404337492) (51,15.623574476533234) (52,15.600351488397907) (53,15.582788100877183) (54,15.568050714243084) (55,15.554991646352152) (56,15.545041996076552) (57,15.535942528151207) (58,15.525893193983682) (59,15.520203416092734) (60,15.513500477275018) (61,15.509188444156857) (62,15.502620245968943) (63,15.497296859856441) (64,15.492454415240907) (65,15.487599712781098) (66,15.480308414052395) (67,15.480308414052395) (68,15.480308414052395) (69,15.478956995992114) (70,15.474753884858899) (71,15.469058683964052) (72,15.469058683964052) (73,15.467524317567454) (74,15.465301451899494) (75,15.461379380220356) (76,15.457062525844147) (77,15.450079842716056) (78,15.450079842716056) (79,15.450079842716056) (80,15.450079842716056) (81,15.450079842716056) (82,15.450079842716056) (83,15.450079842716056) (84,15.450079842716056) (85,15.450079842716056) (86,15.450079842716056) (87,15.450079842716056) (88,15.450079842716056) (89,15.450079842716056) (90,15.450079842716056) (91,15.450079842716056) (92,15.450079842716056) (93,15.450079842716056) (94,15.450079842716056) (95,15.450079842716056) (96,15.450079842716056) (97,15.450079842716056) (98,15.450079842716056) (99,15.450079842716056) (100,15.450079842716056) (101,15.450079842716056) (102,15.450079842716056) (103,15.450079842716056) (104,15.450079842716056) (105,15.450079842716056) (106,15.450079842716056) (107,15.450079842716056) (108,15.450079842716056) (109,15.450079842716056) (110,15.450079842716056) (111,15.450079842716056) (112,15.450079842716056) (113,15.450079842716056) (114,15.450079842716056) (115,15.450079842716056) (116,15.450079842716056) (117,15.450079842716056) (118,15.450079842716056) (119,15.450079842716056) };
  \addplot+[oneStyle] coordinates { (1,40.303288372439177) (2,21.894596980433942) (3,17.976061559970923) (4,17.249119616347468) (5,17.051680863181691) (6,16.958428083385492) (7,16.858729610380465) (8,16.836221819084653) (9,16.802568846610306) (10,16.780501299215089) (11,16.764055737387118) (12,16.733954133020603) (13,16.709993298473034) (14,16.680520640694894) (15,16.670733022511616) (16,16.654530235012984) (17,16.632822330640138) (18,16.412248940600477) (19,16.288020041955679) (20,16.210901979716191) (21,16.155861524238947) (22,16.11675534785941) (23,16.084001607467375) (24,16.067688418773255) (25,16.035012142606931) (26,16.020173156071156) (27,16.000500741835641) (28,15.987957204726349) (29,15.971224776260163) (30,15.955342471765862) (31,15.948242941856238) (32,15.937326394598992) (33,15.92012192884442) (34,15.914709992818704) (35,15.908245017149042) (36,15.899662343796601) (37,15.88839046576221) (38,15.881068784753863) (39,15.872876300214106) (40,15.865809280580539) (41,15.857436223504624) (42,15.85228021578704) (43,15.843073046325935) (44,15.835975636613318) (45,15.824434291884851) (46,15.752732664362791) (47,15.709737996586989) (48,15.681075316490684) (49,15.661726388102149) (50,15.642885286455249) (51,15.621526357066241) (52,15.614544156472025) (53,15.604257981848328) (54,15.600206759972542) (55,15.591473328427115) (56,15.582844734317046) (57,15.578442367410746) (58,15.561244276247567) (59,15.561244276247567) (60,15.561244276247567) (61,15.561244276247567) (62,15.561244276247567) (63,15.561244276247567) (64,15.561244276247567) (65,15.561244276247567) (66,15.561244276247567) (67,15.561244276247567) (68,15.561244276247567) (69,15.561244276247567) (70,15.561244276247567) (71,15.561244276247567) (72,15.561244276247567) (73,15.561244276247567) (74,15.561244276247567) (75,15.561244276247567) (76,15.561244276247567) (77,15.561244276247567) (78,15.561244276247567) (79,15.561244276247567) (80,15.561244276247567) (81,15.561244276247567) (82,15.561244276247567) (83,15.561244276247567) (84,15.561244276247567) (85,15.561244276247567) (86,15.561244276247567) (87,15.561244276247567) (88,15.561244276247567) (89,15.561244276247567) (90,15.561244276247567) (91,15.561244276247567) (92,15.561244276247567) (93,15.561244276247567) (94,15.561244276247567) (95,15.561244276247567) (96,15.561244276247567) (97,15.561244276247567) (98,15.561244276247567) (99,15.561244276247567) (100,15.561244276247567) (101,15.561244276247567) (102,15.561244276247567) (103,15.561244276247567) (104,15.561244276247567) (105,15.561244276247567) (106,15.561244276247567) (107,15.561244276247567) (108,15.561244276247567) (109,15.561244276247567) (110,15.561244276247567) (111,15.561244276247567) (112,15.561244276247567) (113,15.561244276247567) (114,15.561244276247567) (115,15.561244276247567) (116,15.561244276247567) (117,15.561244276247567) (118,15.561244276247567) (119,15.561244276247567) };
  
  \coordinate (zoomPointC) at (axis cs:33,17.7); % Punto del grafico a ser ampliado (x=t, y=valor de C).
  \coordinate (zoomViewC) at (axis cs:73,28); % Ubicación de la ventana del zoom
\end{axis}
\spy on (zoomPointC) in node at (zoomViewC);
\end{tikzpicture}
\caption{Objective functional $J$.}
\label{fig:hc_functional_j}
\end{subfigure}\hfill%
% ================================================================
% ====================== Gráfica C ================================
% ================================================================
\begin{subfigure}[t]{0.48\linewidth}
\centering
\begin{tikzpicture}
\begin{axis}[
  axisStyle,
  legend style={legendHidden},
  unbounded coords=jump,
  xlabel={t},
  ylabel={$|\Delta J|$},
  xmin=-4.9500000001308999,
  xmax=125.9500000001309,
  ymin=-0.039729476714892464,
  ymax=0.55
]
  \addplot+[fourStyle] coordinates { (1,0.00039132440974043448) (2,0.0077010114880700087) (3,0.0039435945182972887) (4,0.0051955521714148745) (5,0.0023431205792654453) (6,0.003956977918512905) (7,0.0020231821508431835) (8,0.0021138153516666591) (9,0.002184569438686168) (10,0.0013144055508540475) (11,0.0026032787704970417) (12,0.00088451671467382198) (13,0.0021656340030951071) (14,0.0018739988125773266) (15,0.0024945605728632358) (16,0.0026459321982841288) (17,0.0026829855776355771) (18,0.0017367723396191082) (19,0.0015620528510069676) (20,0.0015415039096922101) (21,0.0017173594012405147) (22,0.0024713809302525647) (23,0.0020606985642217523) (24,7.7079559428483435e-05) (25,0.0041731215673088684) (26,0.00010575583878491557) (27,0.0028756167289429868) (28,0.0017666237835669563) (29,0.0037209469872081513) (30,0.0031592871828713953) (31,0.0015630466231257856) (32,0.16798483805200348) (33,0.26761583270589284) (34,0.32536312208315721) (35,0.36718397381488188) (36,0.39300965226118123) (37,0.41288552830544845) (38,0.42486093835925942) (39,0.4371506854072873) (40,0.4448493240174809) (41,0.45144654362521486) (42,0.45739039096615386) (43,0.46116430731406233) (44,0.46502902551327274) (45,0.46865283186863671) (46,0.47491406081321053) (47,0.32121328348646649) (48,0.22811183921623801) (49,0.24709505174913104) (50,0.24692870778801534) (51,0.24207327065319184) (52,0.23788263311644897) (53,0.23265469126809712) (54,0.22809255312621346) (55,0.22442647272436034) (56,0.22094693739111193) (57,0.21823084066688381) (58,0.21751289741330915) (59,0.21339438409066069) (60,0.21106399318400904) (61,0.20712429417852896) (62,0.20606553869824218) (63,0.20516205956490907) (64,0.20401758894277222) (65,0.20316603388137189) (66,0.20497667380946361) (67,0.19978689137839645) (68,0.19475115135664467) (69,0.19146682665409287) (70,0.19092774152802683) (71,0.19228999310559836) (72,0.18879361579378973) (73,0.1867539608122506) (74,0.18567903724118118) (75,0.18649904120593952) (76,0.1876145815219914) (77,0.19134509074746298) (78,0.18836556284298389) (79,0.18554802166043061) (80,0.18271551532767027) (81,0.17974219209591347) (82,0.10685621051005789) (83,0.066507708159885937) (84,0.039832597405986547) (85,0.020439036579395164) (86,0.0055676368935362319) (87,0.0063089898966683933) (88,0.016087679314766845) (89,0.024304003121102369) (90,0.031358198771814116) (91,0.037506677963797586) (92,0.042961885959208246) (93,0.047650410468580873) (94,0.051568008578978919) (95,0.055226988480052697) (96,0.058636480658725176) (97,0.061802257213530609) (98,0.064791432861410314) (99,0.067575663890982796) (100,0.070150989386563722) (101,0.072690036985660456) (102,0.074985746644983919) (103,0.077290965769570619) (104,0.07937330875138926) (105,0.081472140532651949) (106,0.083389787441602081) (107,0.085227110050059451) (108,0.087039631282200602) (109,0.088699501073239873) (110,0.090364001366822322) (111,0.091991716207239449) (112,0.093496667634266117) (113,0.094971018282681285) (114,0.096491782607174059) (115,0.097898507531114731) (116,0.099181960437649153) (117,0.099181960437649153) (118,0.099181960437649153) (119,0.099181960437649153) };
  \addplot+[oneStyle] coordinates { (1,0.00011105608129469147) (2,0.013903527899795876) (3,0.0029471507651699369) (4,0.0060090915402710721) (5,0.00031932743940643604) (6,0.0029084641745278361) (7,0.033878691817051276) (8,0.012382122304536836) (9,0.010909135645523094) (10,0.0029692090586905806) (11,0.0074226656930207469) (12,0.0012291624116116395) (13,0.00075139904616250419) (14,0.0095272637984891162) (15,0.00021114051872217487) (16,0.0026155943894643485) (17,0.0015523184504111498) (18,0.20536169087167977) (19,0.31285970109781402) (20,0.37427705858623028) (21,0.41389354752929819) (22,0.4381901411773157) (23,0.45623360718031591) (24,0.45819613070465692) (25,0.47747120190119574) (26,0.47800435233300931) (27,0.48454246214614116) (28,0.48391395363355016) (29,0.48810113785939002) (30,0.49109989208727889) (31,0.48537726015818272) (32,0.48421836558629572) (33,0.49021413899405886) (34,0.48345601139681804) (35,0.47753371038297487) (36,0.4744968637102307) (37,0.47495580077182709) (38,0.47145554694464131) (39,0.46883806537566741) (40,0.46465740586046067) (41,0.46233550900851839) (42,0.45820313948539848) (43,0.45865025386971503) (44,0.45748506841631276) (45,0.46122557620019755) (46,0.52555320880673229) (47,0.40540054375034451) (48,0.33398316397983763) (49,0.28820888221813235) (50,0.25917982567025888) (51,0.24412139012018486) (52,0.22368996504233074) (53,0.21118481029695246) (54,0.19593650739675539) (55,0.18794479064939651) (56,0.18314419915061819) (57,0.17573100140734432) (58,0.1821618151494242) (59,0.17235352393582737) (60,0.16332019421145993) (61,0.15506846208781866) (62,0.14744150841961812) (63,0.14121464317378241) (64,0.13522772793611182) (65,0.12952147041490214) (66,0.12404081161429126) (67,0.1188510291832241) (68,0.11381528916147232) (69,0.10917954639863936) (70,0.10443735013935829) (71,0.10010440082208305) (72,0.096608023510274421) (73,0.093034002132137417) (74,0.089736212893107847) (75,0.086634145178727806) (76,0.083432831118571471) (77,0.080180657215951356) (78,0.077201129311472272) (79,0.074383588128918987) (80,0.071551081796158655) (81,0.068577758564401847) (82,0.0043082230214537276) (83,0.044656725371625683) (84,0.071331836125525072) (85,0.090725396952116455) (86,0.10559679663797539) (87,0.11747342342818001) (88,0.12725211284627846) (89,0.13546843665261399) (90,0.14252263230332574) (91,0.1486711114953092) (92,0.15412631949071987) (93,0.15881484400009249) (94,0.16273244211049054) (95,0.16639142201156432) (96,0.1698009141902368) (97,0.17296669074504223) (98,0.17595586639292193) (99,0.17874009742249442) (100,0.18131542291807534) (101,0.18385447051717208) (102,0.18615018017649554) (103,0.18845539930108224) (104,0.19053774228290088) (105,0.19263657406416357) (106,0.1945542209731137) (107,0.19639154358157107) (108,0.19820406481371222) (109,0.19986393460475149) (110,0.20152843489833394) (111,0.20315614973875107) (112,0.20466110116577774) (113,0.2061354518141929) (114,0.20765621613868568) (115,0.20906294106262635) (116,0.21034639396916077) (117,0.21034639396916077) (118,0.21034639396916077) (119,0.21034639396916077) };
\end{axis}

\end{tikzpicture}
\caption{Error of $J$.}
\label{fig:hc_functional_error_j}
\end{subfigure}

\vspace{0.25em}
\centering
% Leyenda compartida (aplica a los 4 paneles de la figura principal).
\begin{tikzpicture}
  \begin{axis}[
    hide axis,
    xmin=0, xmax=1, ymin=0, ymax=1,
    legend columns=3,
    legend style={
      legendBox,
      at={(0.5,0.5)},
      anchor=center,
      font=\small
    }
  ]
    \addlegendimage{refStyle}
    \addlegendentry{Classical reference}
    \addlegendimage{fourStyle}
    \addlegendentry{Sequential}
    \addlegendimage{oneStyle}
    \addlegendentry{Non-sequential}
  \end{axis}
\end{tikzpicture}
\vspace{0.18em}
\caption{Comparison of the objective functional $J$ and its error with Hard-Constrained PINNs. The classical method vs the sequential scheme with 4 subintervals vs and the non-sequential scheme with 1 subinterval.}
\label{fig:hc_functional_panel}
\end{figure}